\documentclass[12pt]{imsart}

\usepackage[margin=0.9in]{geometry}
\usepackage{graphicx,mathrsfs,bbm}
\usepackage[font=large]{caption} 

\usepackage[toc]{appendix}
\RequirePackage[OT1]{fontenc}
\RequirePackage[authoryear]{natbib}
\bibliographystyle{imsart-nameyear}
\usepackage{adjustbox}
\usepackage{amsmath,amssymb,amsthm,bm,latexsym}
\usepackage{graphicx,psfrag,epsf}
\usepackage{float,subcaption}
\usepackage{epigraph,xcolor}
\usepackage{enumerate}
\usepackage{url}
\usepackage{booktabs}
\usepackage{multirow}
\usepackage{hyperref}
\hypersetup{colorlinks=true,citecolor=blue,pdfpagemode=FullScreen, linktocpage=true}
\usepackage{cleveref}
\usepackage{amsfonts}
\usepackage{dsfont}
\usepackage{amsmath}
\usepackage{orcidlink}



\startlocaldefs
\def\E{\mathbb{E}}

\def\P{\mathbb{P}}

\newcommand{\sgn}{\mathrm{sgn}}
\newcommand{\gb}{\bar{g}}

\newcommand{\et}{e_t}
\newcommand{\ett}{e_{t+1}}
\newcommand{\tet}{\tilde{e}_t}
\newcommand{\tett}{\tilde{e}_{t+1}}
\newcommand{\tm}{t_{\mathbf{mix}}}

\renewcommand{\P}{\mathbb{P}}

\newcommand{\RNum}[1]{\uppercase\expandafter{\romannumeral #1\relax}}

\DeclareMathOperator*{\argmax}{arg\,max}

\newcommand{\tbp}{\tilde{\beta}_p}

\newtheorem{theorem}{Theorem}[section]

\newtheorem{lemma}{Lemma}[section]

\newtheorem{proposition}{Proposition}[section]

\numberwithin{equation}{section}  
\endlocaldefs

\begin{document}
	
	\begin{frontmatter}
		\title{Mixing Phases and Metastability for the Glauber Dynamics on the $p$-Spin Curie-Weiss Model}
		\runtitle{Mixing of the Glauber Dynamics on the $p$-Spin Curie-Weiss Model}
		
		\author[A.D.]{\fnms{Ramkrishna Jyoti}~\snm{Samanta}\ead[label=e1]{ramkrishna.samanta.24@ucl.ac.uk}},
		\author[S.M.]{\fnms{Somabha}~\snm{Mukherjee}\ead[label=e2]{somabha@nus.edu.sg}}
		\and
		\author[B.S.]{\fnms{Jiang}~\snm{Zhang}\ead[label=e3]{zhangj@nus.edu.sg}}
		
		\address[A.D.]{Department of Statistical Science,
			University College London\printead[presep={,\ }]{e1}}
		
		\address[S.M.]{Department of Statistics and Data Science, National University of Singapore\printead[presep={,\ }]{e2}}
		
		\address[B.S.]{Department of Statistics and Data Science, National University of Singapore\printead[presep={,\ }]{e3}}
		
		\begin{abstract}
			The Glauber dynamics for the classical $2$-spin Curie-Weiss model on $N$ nodes with inverse temperature $\beta$ and zero external field is known to mix in time $\Theta(N\log N)$ for $\beta < \frac{1}{2}$, in time $\Theta(N^{3/2})$ at $\beta = \frac{1}{2}$, and in time $\exp(\Omega(N))$ for $\beta >\frac{1}{2}$. In this paper, we consider the $p$-spin generalization of the Curie-Weiss model with an external field $h$, and identify three disjoint regions \textit{almost exhausting} the parameter space, with the corresponding Glauber dynamics exhibiting three different orders of mixing times in these regions. The construction of these disjoint regions depends on the number of local maximizers of a certain function $H_{\beta,h,p}$, and the behavior of the second derivative of $H_{\beta,h,p}$ at such a local maximizer. Specifically, we show that if $H_{\beta,h,p}$ has a unique local maximizer $m_*$ with $H_{\beta,h,p}''(m_*) < 0$ and no other stationary point, then the Glauber dynamics mixes in time $\Theta(N\log N)$, and if $H_{\beta,h,p}$ has multiple local maximizers, then the mixing time is $\exp(\Omega(N))$. Finally, if $H_{\beta,h,p}$ has a unique local maximizer $m_*$ with $H_{\beta,h,p}''(m_*) = 0$, then the mixing time is $\Theta(N^{3/2})$. We provide an explicit description of the geometry of these three different phases in the parameter space, and observe that the only portion of the parameter plane that is left out by the union of these three regions, is a one-dimensional curve, on which the function $H_{\beta,h,p}$ has a stationary inflection point. Finding out the exact order of the mixing time on this curve remains an open question. \textcolor{black}{Finally, we show that if $H_{\beta,h,p}$ has multiple local maximizers (\textit{metastable states}), then one can create a restricted version of the original Glauber dynamics, which still mixes in time $\Theta(N\log N)$.}
			
		\end{abstract}
		
	\end{frontmatter}
	
	\section{Introduction}
	
	The growing availability of dependent network data in modern statistics has highlighted the need for realistic and mathematically convenient approaches to model structural dependence in high-dimensional distributions. Originally developed in statistical physics to study ferromagnetism, the Ising model \citep{ising1925} has proven highly effective for modeling network dependent binary datasets occurring naturally in fields such as spatial statistics, social networks, epidemiology, computer vision, neural networks and computational biology \citep{banerjee2014, daskalakis2020, green2002, hopfield1982, montanari2010, geman1986}. The classical Ising model incorporates only pairwise interactions, which is often not enough to explain more complex higher order interactions, such as peer group effects in social networks, and multi-atomic interactions on crystal surfaces \citep{ASLANOV1988443}. A natural generalization of the classical $2$-spin Ising model, designed
	to capture higher-order dependencies, is the $p$-spin Ising model \citep{Barra2009, Derrida1980, Mukherjee2020, Mukherjee_2022}, where the quadratic interaction term in the Hamiltonian is substituted by a multilinear polynomial of degree $p\ge 2$. To be more specific, the probability mass function of this model is given by:
	\begin{equation}\label{genising}
		\P_{\beta,h,\bm J, p}(\bm x) := \frac{\exp\left\{\beta \sum_{1\le i_1,\ldots,i_p\le N} J_{i_1\ldots i_p} x_{i_1}\ldots x_{i_p} + h\sum_{i=1}^N x_i\right\}}{Z(\beta,h,\bm J,p)} \quad \text{for}~\bm x\in \mathcal{C}_N := \{-1, 1\}^N
	\end{equation}
	where $\bm J := ((J_{i_1\ldots i_p}))_{(i_1,\ldots,i_p) \in [N]^p}$ is an interaction tensor, $\beta >0$ and $h\in \mathbb{R}$ are measures of interaction and signal strengths, referred to as the inverse temperature and external magnetic field parameters, respectively, in the physics literature, and 
	$Z(\beta,h,\bm J,p)$ is the normalizing constant, needed to ensure that the probabilities in \eqref{genising} add to $1$. Ising models with higher-order interactions turn out to be useful tools in studying multi-atom interactions in lattice gas models, such as the square-lattice eightvertex model, the Ashkin-Teller model, and Suzuki’s pseudo-3D anisotropic model \citep{Barra2009, Heringa1989, Jorg2010, Ohkuwa2018, Suzuki1972, Suzuki1971, Turban2016, Yamashiro2019}. Higher-order spin systems have also appeared in statistics as a convenient way of modeling  peer-group effects in social networks \citep{Daskalakis_2020, Mukherjee_2022, Mukherjee_2021}.

	Computing the probabilities \eqref{genising} is infeasible, due to the presence of the intractable normalizing constant $Z(\beta,h,\bm J,p)$, which makes simulation from the model \eqref{genising} difficult. A classical method of simulating from the Ising model \eqref{genising} is the \textit{heat-bath Glauber dynamics}, which starts with a suitable initial configuration $\bm X^0 \in \mathcal{C}_N$, and at each step $t$, chooses a random index from the set $[N]:=\{1,\ldots,N\}$, replacing the corresponding entry of $\bm X^t$ by an observation simulated from the conditional distribution of that entry in $\bm X^t$ given the remaining ones, keeping the rest entries unchanged. The resulting stochastic process $\bm X^t$ is a Markov chain with the model \eqref{genising} as its stationary distribution, so the speed of convergence of this simulation process can be quantified by the mixing time of the Glauber dynamics. Analyzing the mixing time, however, is very difficult, unless one assumes some simplifying structural assumptions on the interaction tensor $\bm J$. One such convenient assumption is that all $p$-tuples of nodes interact with each other, and with the same intensity (taken to be $N^{1-p}$ to ensure non-trivial scaling properties), which results in the so called \textit{Curie-Weiss model}.
	
	Given parameters $\beta > 0$ and $h \in \mathbb{R}$, the $p$-tensor Curie-Weiss model is a discrete exponential family on $\mathcal{C}_N$, defined as:
	\begin{equation}\label{moddef}
		\P_{\beta, h, p}(\bm x) = \frac{1}{2^N Z(\beta, h, p)} \exp\left\{N\left(\beta \Bar{x}^p + h \Bar{x}\right)\right\}\quad\text{for}~\bm x := (x_1,\ldots,x_N) \in \mathcal{C}_N
	\end{equation}
	where $\bar{x} := \frac{1}{N} \sum_{i=1}^N x_i$, and the normalizing constant $Z(\beta, h, p)$, also referred to as the partition function,  is given by:
	\[
	Z(\beta, h, p) = \frac{1}{2^N} \sum_{\bm x \in \mathcal{C}_N} \exp\left\{ N \left(\beta \Bar{x}_N^p + h \Bar{x}_N\right)\right\}.
	\]
	The parameters $\beta$ and $h$ are typically referred to as the inverse temperature and the external magnetic field, respectively.
	The Glauber dynamics for the $p$-spin Curie-Weiss model is a Markov chain $\bm X^t$ with state space $\mathcal{C}_N$, which evolves as follows. At each step $t$, an index $I$ is chosen uniformly at random from the set $[N]$, and given $I=i$, $X_i^{t+1}$ is generated from a Rademacher distribution with mean given by $\tanh(p\beta (\bar{X}^t)^{p-1} + h)$. All the other coordinates of $\bm X^{t}$ remain unchanged in $\bm X^{t+1}$.
	Consequently, the dynamics of the mean magnetization $c_t := \bar{X}^t$ are given by the transition probabilities:
	
	\[  p\left(c, c + m\right) := \P\left(c_{t+1}=c + m\Bigg| c_t = c\right)  :=\left\{
	\begin{array}{ll}
		\left(\frac{1 - c}{2}\right) \frac{\exp({p\beta c^{p-1} + h})}{\exp({p\beta c^{p-1} + h}) + \exp({-p\beta c^{p-1} - h})} & \text{if}~m=\frac{2}{N} \\
		\left(\frac{1 + c}{2}\right) \frac{\exp({-p\beta c^{p-1} - h})}{\exp({p\beta c^{p-1} + h}) + \exp({-p\beta c^{p-1} - h})} & \text{if}~m=-\frac{2}{N} \\
	\end{array} 
	\right. \]
	and $p(c,c) = 1-p(c,c+2/N)-p(c,c-2/N)$. It can be shown that the chain $\bm X^t$ is irreducible and aperiodic, which is also reversible with respect to the stationary distribution $\P_{\beta,h,p}$. In this paper, we are interested in estimating the mixing time of this chain, defined as:
	$$\tm(\varepsilon) := \min\left\{t: \max_{\bm x\in \mathcal{C}_N} d_{\mathrm{TV}}(\P_{\bm x}(\bm X^t \in \cdot), \P_{\beta,h,p}) \le \varepsilon\right\}$$
	where $\P_{\bm x}(\bm X^t \in \cdot)$ denotes the distribution of $\bm X^t$ given $\bm X^0 = \bm x$, and $d_{\mathrm{TV}}$ stands for the total variation distance.
	
	A significant amount of work has been done on this problem for the case $p=2$, when $h=0$. \cite{Griffiths1966} showed that in the low-temperature phase ($\beta >\frac{1}{2}$), the Glauber dynamics mixes in exponential (in $N$) time. In the high-temperature phase ($\beta < \frac{1}{2}$), mixing happens in time of order $N\log N$ \citep{Aizenman1987, Bubley1997}, whereas at the critical temperature $\beta =\frac{1}{2}$, the mixing time has order $N^{3/2}$ \citep{levin2008}. Moreover, \cite{levin2008} demonstrates a cutoff phenomenon at time $[2(1-2\beta)]^{-1} N\log N$ with window size $N$ for $\beta <\frac{1}{2}$. Building on these insights, \cite{Ding2014} gave a complete characterization of the mixing time of the Glauber dynamics as $\beta$ approaches $\frac{1}{2}$, thereby illustrating its transition behavior from $\Theta(N\log N)$ through $\Theta(N^{3/2})$ to $e^{\Theta(N)}$.
	
	Moving on to Glauber dynamics on more general graphs, \cite{Hayes2005AGL} showed that any non-trivial Glauber dynamics on any bounded degree graph must have mixing time $\Omega(N\log N)$. In fact, they show that the mixing time is at least $N\log N/\Theta(d\log^2 d)$, where $d$ is the maximum degree of the graph. Later, \cite{uniform_lower_peres} established a uniform lower bound of $(\frac{1}{4} + o(1)) N \log N$ on the mixing time for the Glauber dynamics on general  ferromagnetic $2$-spin Ising models. In \cite{Chen2020RapidMO}, the authors establish upper bounds on the mixing time of the Glauber dynamics for general Ising models and general antiferromagnetic $2$-spin systems under certain high-temperature assumptions.
	Mixing time of Glauber dynamics for Ising models on random regular graphs has been analyzed in \cite{ad9501433adb4d709fec2552c50657df}, and at high temperatures for sparse Erd\H{o}s-Renyi random graphs in \cite{Mossel2007RapidMO} (also see \cite{Bianchi2008}). Upper bounds on the mixing times of the Glauber dynamics for the Potts model on general bounded-degree graphs, and for the conditional distribution of the Curie-Weiss Potts model near an equilibrium macrostate have been obtained recently in \cite{he1}. For mixing time results on more general models such as the exponential random graph models and random cluster dynamics, we refer the interested reader to \cite{Bhamidi2008MixingTO, spatial_mixing, DEMUSE2019443}

	In order to explore the mixing time of the Glauber dynamics for the $p$-spin Curie-Weiss model \eqref{moddef}, we need to introduce some notations. For $p \geq 2$ and $(\beta, h) \in \Theta := (0, \infty) \times \mathbb{R}$, define the function $H = H_{\beta, h, p} : [-1, 1] \to \mathbb{R}$ as:
	\[
	H_{\beta, h, p}(x) = \beta x^p + h x - I(x),
	\]
	where
	\[
	I(x) = \frac{1}{2} \left\{(1 + x) \log(1 + x) + (1 - x) \log(1 - x)\right\} \quad \text{for } x \in [-1, 1].
	\]
	The mixing time of the Glauber dynamics $\bm X^t$ for the $p$-spin Curie-Weiss model \eqref{moddef} exhibits phase transitions, with different orders on the following three disjoint regions of the parameter space $\Theta$ induced by the function $H_{\beta,h,p}$:
	\begin{enumerate}
		\item \textbf{$p$-locally regular points}: The point $(\beta, h)$ is said to be $p$-locally regular, if the function $H_{\beta, h, p}$ has a local maximizer $m_* = m_*(\beta, h, p) \in (-1, 1)$, satisfying $H''_{\beta, h, p}(m_*) < 0$, and has no other stationary point. The set of all $p$-locally regular points is denoted by $\mathfrak{R}_p$.
		\item \textbf{$p$-locally critical points}: The point $(\beta, h)$ is said to be $p$-locally critical, if the function $H_{\beta, h, p}$ has more than one local maximizer. The set of $p$-locally critical points is given by $\mathfrak{C}_p$. 
		\item \textbf{$p$-special points}: The point $(\beta, h)$ is said to be $p$-special if the function $H_{\beta, h, p}$ has a unique local maximizer $m_* = m_*(\beta, h, p) \in (-1, 1)$, and $H''_{\beta, h, p}(m_*) = 0$. The set of $p$-special points is given by $\mathfrak{S}_p$.  
	\end{enumerate}
	
	\begin{figure}
		\centering
		\begin{subfigure}{1.0\textwidth}
			\centering
			\includegraphics[width=\textwidth]{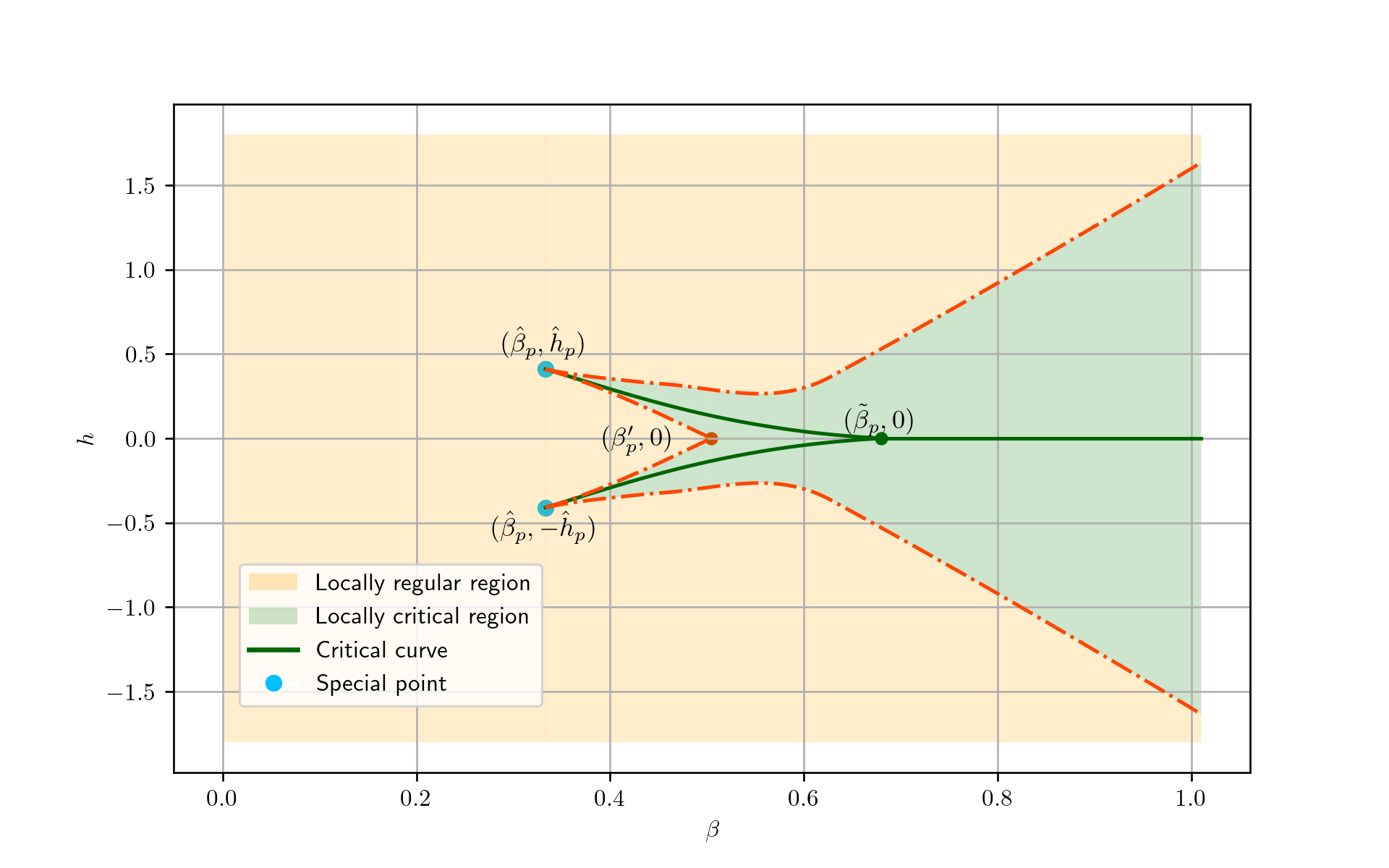}
			\caption{Mixing phase diagram for $p=4$}
			\label{fig:region even}
		\end{subfigure}
		\begin{subfigure}{1.0\textwidth}
			\centering
			\includegraphics[width=\textwidth]{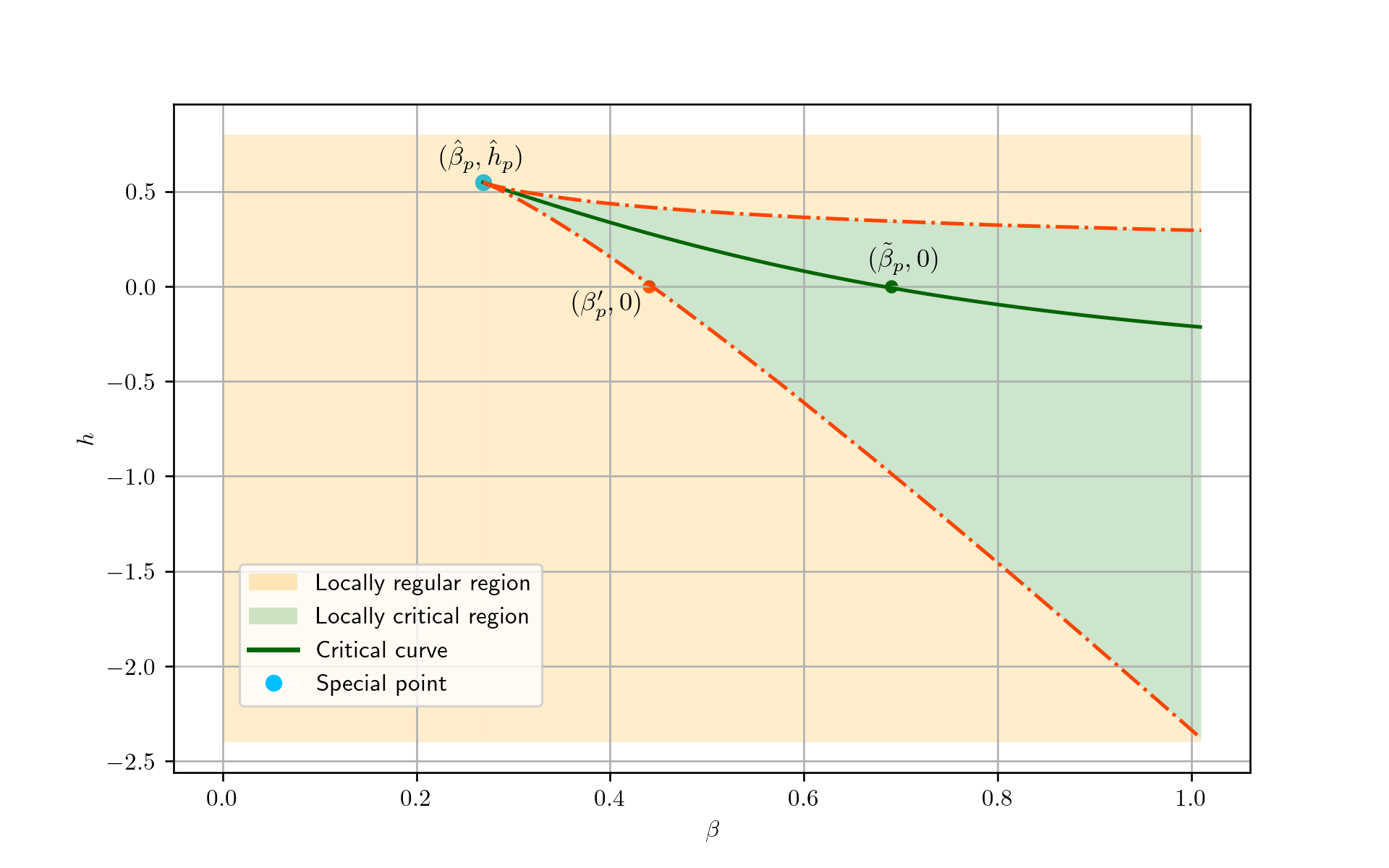}
			\caption{Mixing phase diagram for $p=5$}
			\label{fig:region odd}
		\end{subfigure}
		\caption{Mixing phase diagram }
		\label{fig:overall}
	\end{figure}

	The geometries of the regions $\mathfrak{R}_p, \mathfrak{S}_p$ and $\mathfrak{C}_p$ are made explicit in the Appendix \ref{appendix_geo}. They bear close resemblance with the $p$-regular, $p$-special and $p$-critical sets introduced in \cite{Mukherjee_2021} in the context of asymptotics of the mean magnetization and parameter estimates in the model \eqref{moddef}, which were defined in terms of uniqueness of the global maximizers of $H_{\beta,h,p}$. We will see that the set $\mathfrak{C}_p$ is a (non-uniform) band around the set of $p$-critical points in \cite{Mukherjee_2021}, which is just a one-dimensional curve in $\Theta$. The set $\mathfrak{R}_p$ is a strict subset of the set of all $p$-regular points in \cite{Mukherjee_2021}, while the notions of $p$-special points are same in both the papers. In the notations of \cite{Mukherjee_2021}, for all $p$-regular points $(\beta,h)$, the magnetization vector $\bar{X}$ simulated from the model \eqref{moddef} has an asymptotic normal distribution, for all $p$-critical points $(\beta,h)$, $\bar{X}$ is asymptotically a mixture of Gaussians, while for all $p$-special points, $\bar{X}$ is asymptotically distributed as a generalized Gaussian distribution with shape parameter $4$. Figure \ref{fig:overall} shows the three regions $\mathfrak{R}_p, \mathfrak{S}_p$ and $\mathfrak{C}_p$ for $p=4$ (in Figure \ref{fig:region even}) and $p=5$ (in Figure \ref{fig:region odd}).

	\textcolor{black}{In this paper, we show that the Glauber dynamics mixes in time $\Theta(N\log N)$ for all $(\beta,h)\in \mathfrak{R}_p$}, in time $\Theta(N^{3/2})$ for all $(\beta,h) \in \mathfrak{S}_p$, and in time $e^{\Omega(\sqrt{N})}$ for all $(\beta,h)\in \mathfrak{C}_p$. A notable difference between the phase transitions with respect to the asymptotics of the mean magnetization in the model \eqref{moddef} and the mixing times of the Glauber dynamics, is that in the former scenario, the non-global local maximizers of $H_{\beta,h,p}$ (if any) do not play any role, whereas in the latter scenario, the phase transitions are governed by all the local maximizers of $H_{\beta,h,p}$. This is because in the former case,  the non-global local maximizers do not contribute any positive mass to $\bar{X}$ asymptotically. However, it is also true that $\bar{X}$ concentrates to these local maximizers conditionally on the complement of any open set containing all the global maximizers. It is this phenomenon, that causes the Glauber dynamics to get stuck around the local maximizers for a long time, if the chain happens to initiate close to one of these. \textcolor{black}{However, even in this case, we show that it is possible to construct a restricted version of the original Glauber dynamics that still mixes fast (in time $\Theta(N\log N)$).} 
	
	Although the results in this paper are for the case $p\ge 3$ only, we would like to point out one stark difference of the phase geometry with the $p=2$ case. When $p=2$ and $h=0$, the transition thresholds for the mixing time of the Glauber dynamics and the asymptotics of $\bar{X}$ coincide at $\frac{1}{2}$, while for $p\ge 3$ and $h =0$, it will follow from Lemma \ref{pcritdescr} in Appendix \ref{appendix_geo} that the mixing time transition threshold $\beta_p'$ is strictly smaller than the transition threshold $\tilde{\beta}_p$ for the asymptotics of $\bar{X}$. This discrepancy arises due to the different natures of the function $H_{\beta,0,p}$ for $p=2$ and $p\ge 3$. To be specific, for $p=2$, the point $0$ is the only local maximizer of $H_{\beta,0,p}$ for $\beta \le \frac{1}{2}$, which guarantees fast mixing in this regime. On the other hand, for $p\ge 3$, the function $H_{\tilde{\beta}_p,0,p}$ has at least two global maximizers, which ensures the presence of multiple local maximizers of $H_{\beta,0,p}$ for values of $\beta$ slightly smaller than $\tilde{\beta}_p$ too, thereby making the mixing exponentially slow.
	
	Finally, we would like to mention that the complement of the set $\mathfrak{R}_p\bigcup \mathfrak{C}_p\bigcup \mathfrak{S}_p$ consists of all points $(\beta,h)$ where the function $H_{\beta,h,p}$ has a local maximizer and a stationary inflection point. This set is precisely the topological boundary of $\mathfrak{C}_p$ minus the $p$-special points, and is depicted by the red dashed curves in Figure \ref{fig:overall}. These stationary inflection points might hamper quick attraction of the mean magnetization chain to the maximizer of $H_{\beta,h,p}$ if it starts from the side of this inflection point opposite to the maximizer, and hence, our analysis does not go through to this boundary region. Finding the exact order of the mixing time of the Glauber dynamics on this boundary region remains an open question, although in Appendix \ref{ap:apxc} we give a crude polynomial (in $N$) lower bound to this mixing time.

	\begin{figure}
		\centering
		\includegraphics[width=1.0\linewidth]{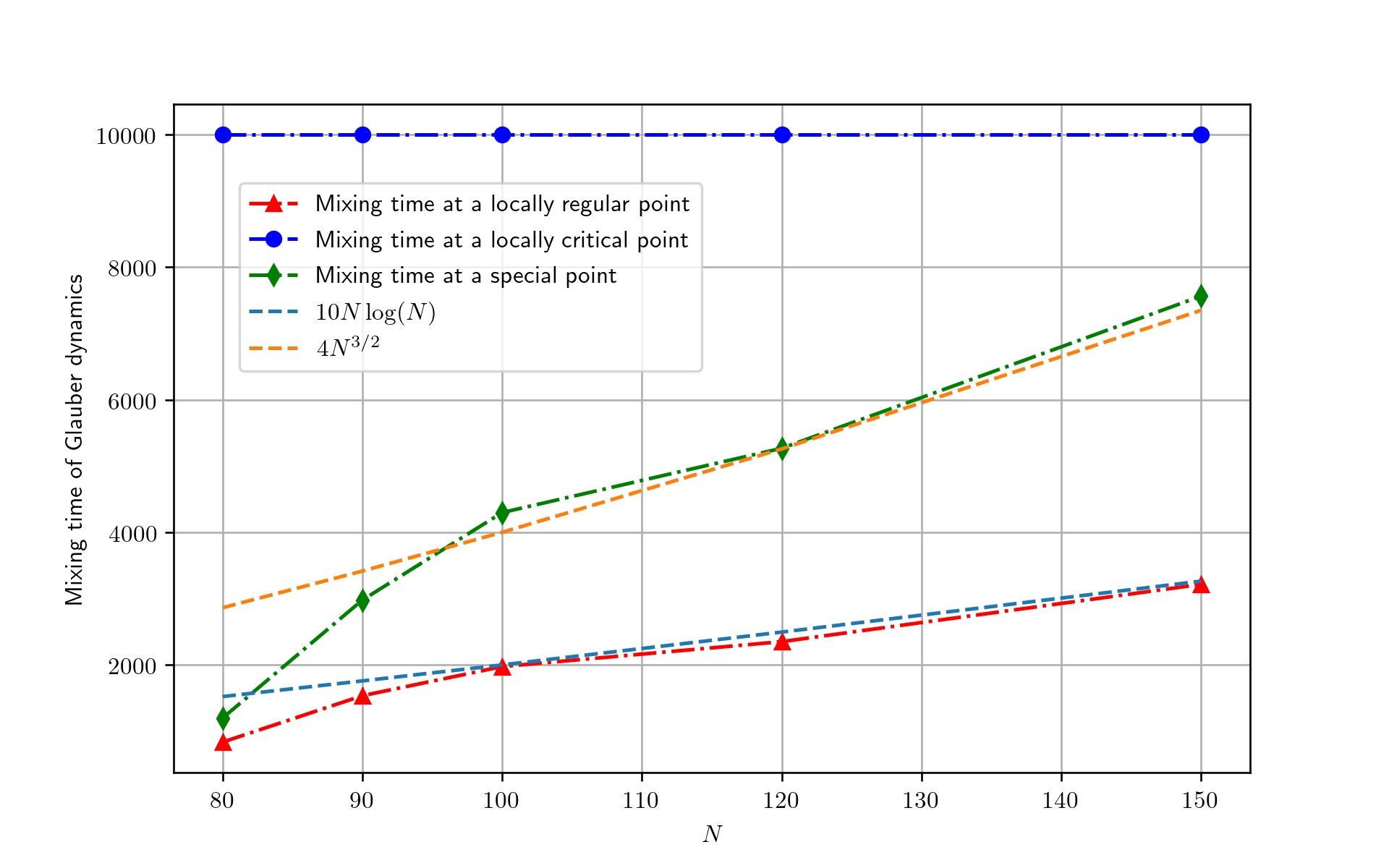}
		\caption{\textcolor{black}{Plot of the mixing times (capped at 10,000) of the Glauber dynamics against $N$ for the $4$-locally regular point \textcolor{black}{(0.054, 0.5)}, the $4$-special point \textcolor{black}{(1/3, 0.41)} and the $4$-locally critical point \textcolor{black}{(0.51, 0.184)}, compared against their theoretical estimates.} }
		\label{fig:main}
	\end{figure}
	
	\section{Main Results}
	\textcolor{black}{We now present the main results of this paper. We start with a phase transition result on the mixing time of the Glauber dynamics for the $p$-spin Curie-Weiss model.}
	\begin{theorem}\label{Theorem 1}
		For every $\epsilon \in (0,\frac{1}{2})$, $p\ge 3$ and $(\beta,h)\in \Theta$, we have the following.
		\begin{enumerate}
			\item If $(\beta,h)\in \mathfrak{R}_p$, then \( \tm(\epsilon) = \Theta_\epsilon (N \log N) \).
			\item If $(\beta,h)\in \mathfrak{C}_p$, then \( \tm(\epsilon) \ge e^{\Omega_{\epsilon}(N)} \). 
			\item If $(\beta,h)\in \mathfrak{S}_p$, then \( \tm(\epsilon) = \Theta_\epsilon(N^{3/2}) \). 
		\end{enumerate}
	\end{theorem}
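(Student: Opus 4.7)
The overall plan is to exploit the fact that the scalar magnetization $c_t = \bar X^t$ is itself a birth-death Markov chain on the lattice $\{-1,-1+2/N,\ldots,1\}$, since the transition probabilities written in the excerpt depend on $\bm X^t$ only through $c_t$. A direct computation gives the exact one-step drift
\[
\mathbb{E}[c_{t+1}-c_t\,|\,c_t=c] \;=\; \frac{1}{N}\bigl(\tanh(p\beta c^{p-1}+h) - c\bigr),
\]
and conditional variance $\Theta(1/N^2)$; the drift factor $\tanh(p\beta c^{p-1}+h)-c$ shares its zero set and sign pattern with $H'_{\beta,h,p}(c)$, so the chain is attracted to local maximizers of $H_{\beta,h,p}$ and repelled from local minimizers. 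A standard two-phase coupling argument reduces the mixing time of $\bm X^t$ to that of $c_t$ plus an $O(N\log N)$ within-slice reshuffling phase, and the same coupon-collector calculation also supplies the universal lower bound $\tm(\varepsilon)\ge\Omega(N\log N)$ valid in all three cases.

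For $(\beta,h)\in\mathfrak{R}_p$, Taylor expansion at $m_*$ with $H''(m_*)<0$ gives $\tanh(p\beta c^{p-1}+h)-c = -\alpha(c-m_*)+O((c-m_*)^2)$ with some $\alpha>0$ near $m_*$, while the absence of any other stationary point means the drift is sign-definite throughout $[-1,1]\setminus\{m_*\}$. The Lyapunov function $V(c)=(c-m_*)^2$ forces hitting of an $O(1)$-neighborhood of $m_*$ in $O(N)$ steps, after which the first-order contraction yields $\mathbb{E}[V(c_{t+1})\mid c_t]\le(1-2\alpha/N)V(c_t)+O(1/N^2)$, so $c_t$ equilibrates in $O(N\log N)$ further steps, giving the $O(N\log N)$ upper bound. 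For $(\beta,h)\in\mathfrak{C}_p$, pick two local maximizers $m_1<m_2$ of $H_{\beta,h,p}$ and the local minimizer $m_0\in(m_1,m_2)$ between them. Laplace asymptotics for $\pi_{\bar X}(c)$ show that the set $A=\{\bm x:\bar x<m_0\}$ has edge conductance $Q(A,A^c)\lesssim e^{-N\delta_2}$ while $\pi(A)\gtrsim e^{-N(\delta_2-\delta_1)}$ (with $\delta_i:=H(m_i)-H(m_0)>0$), so the Cheeger ratio is bounded above by $e^{-N\min(\delta_1,\delta_2)}$, and the bottleneck bound yields $\tm(\varepsilon)\ge e^{\Omega(N)}$.

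The case $(\beta,h)\in\mathfrak{S}_p$ is the principal technical obstacle. Here $H''(m_*)=0$ but $m_*$ is still a local maximizer; the description of $\mathfrak{S}_p$ in Appendix~\ref{appendix_geo} supplies the quartic expansion $H_{\beta,h,p}(x)=H(m_*)-\tfrac{\kappa}{4}(x-m_*)^4+O((x-m_*)^5)$ with $\kappa>0$, equivalently $\tanh(p\beta c^{p-1}+h)-c = -\kappa(c-m_*)^3+O((c-m_*)^4)$, and the equilibrium fluctuation scale is $|c_\infty-m_*|=\Theta(N^{-1/4})$ from the shape-$4$ generalized Gaussian limit. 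For the upper bound I would feed this cubic drift into the nonlinear Lyapunov recursion
\[
\mathbb{E}[V(c_{t+1})-V(c_t)\mid c_t]\;\le\;-\tfrac{2\kappa}{N}\,V(c_t)^2+\frac{C}{N^2},\qquad V(c):=(c-m_*)^2,
\]
whose ODE analogue $\dot v=-2\kappa v^2/N$ integrates to $v(t)^{-1}\asymp 2\kappa t/N$, so $V(c_t)$ reaches the $\Theta(N^{-1/2})$ equilibrium scale in $O(N^{3/2})$ steps; a subsequent coupling inside the equilibrium shell finishes the mixing in a further $O(N^{3/2})$ steps. The matching lower bound uses a diffusive argument: starting from $c_0=m_*$, the increments give $\mathrm{Var}(c_t)=O(t/N^2)$, so for $t\ll N^{3/2}$ one has $\mathrm{Var}(c_t)\ll\mathrm{Var}(c_\infty)=\Theta(N^{-1/2})$, and a Chebyshev-based test set $\{|c-m_*|\le C\sqrt{t}/N\}$ separates $c_t$ from the stationary law in total variation. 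The hardest part will be making the second (within-shell) coupling phase quantitative in the quartic-flat setting, where the drift contraction inside the equilibrium shell is only $\Theta(1/\sqrt N)$-strong; this forces an adaptation of the Levin-Luczak-Peres technique (designed for $p=2$, $\beta=1/2$) to cubic rather than linear drift corrections.
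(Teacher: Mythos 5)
Your plan follows the same three-part architecture as the paper's proof: magnetization-chain drift analysis, burn-in, and two-phase coupling for $\mathfrak{R}_p$; a bottleneck/Cheeger lower bound for $\mathfrak{C}_p$; and a cubic-drift Lyapunov recursion plus a diffusive lower bound for $\mathfrak{S}_p$. The $\mathfrak{R}_p$ and $\mathfrak{C}_p$ parts match the paper's essentially line for line, so the genuine divergence is the $\mathfrak{S}_p$ case, on both ends, and each choice buys something. For the upper bound the paper tracks $\theta_t:=\E(|e_t|\mathbbm{1}_{\tau_0>t})$ with $e_t=c_t-c^*$, and since the drift $g(u):=\lambda(c^*+u)-c^*$ is neither globally convex nor concave it must first be replaced by its greatest convex minorant (on $u<0$) and least concave majorant (on $u>0$) before Jensen's inequality can close the recursion; your choice $V(c)=(c-m_*)^2$ converts the cubic drift into a penalty $-\frac{2\kappa}{N}V^2$, which is already convex in $V$, so Jensen gives $\E V(c_{t+1})\le \E V(c_t)-\frac{2\kappa}{N}(\E V(c_t))^2+O(N^{-2})$ directly with no convexification step, a cleaner path to the $O(N^{3/2})$ contraction into the $N^{-1/4}$-shell. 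For the lower bound the paper (following Levin, Luczak and Peres) starts at $e_0=2AN^{-1/4}$, compares to a one-sided modified chain $\tilde e_t$, and bounds $\E_{e_0}d_t^2\lesssim t/N^2$ to show the chain stays above $AN^{-1/4}$ for $\gtrsim N^{3/2}$ steps; you instead start at $c_0=m_*$, observe that the restoring cubic drift only decreases $\E(c_t-m_*)^2$ so $\mathrm{Var}(c_t)\lesssim t/N^2$, and use a Chebyshev window of width $O(\sqrt t/N)\ll N^{-1/4}$ to separate $\P_{m_*}(c_t\in\cdot)$ from the shape-$4$ stationary law when $t\ll N^{3/2}$. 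Both are correct: the paper's version needs the auxiliary reflected chain, yours trades it for a (worth-writing-out) verification that the nonlinear drift does not inflate the second moment beyond the driftless estimate. One small caution on your $\mathfrak{R}_p$ sketch: the raw Hamming-distance coupling needs $|\lambda'|<1$, not merely $\lambda'<1$, and $H''(m_*)<0$ gives only the latter ($\lambda'(m_*)\le -1$ is not excluded); the paper first burns both magnetization chains into a neighborhood of $c^*$ and then runs a one-dimensional Lyapunov bound valid for all $\lambda'(c^*)<1$ before coupling configurations — your $V=(c-m_*)^2$ recursion plays that role, so you should just be explicit that the within-shell configuration coupling comes after this stage.
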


	Figure~\ref{fig:main} shows simulation results for mixing times (capped at 10,000) of the Glauber dynamics in the three cases, with respect to varying $N$, for \textcolor{black}{$\epsilon = 0.35$}. The mixing time graph for the \textcolor{black}{$4$}-locally regular point \textcolor{black}{$(0.054, 0.5)$} and that for the $4$-special point \textcolor{black}{$(1/3, 0.41)$} closely approximate the graphs of the functions $10 N\log N$ and $4N^{3/2}$, respectively, whereas the mixing times for the $4$-locally critical point \textcolor{black}{$(0.51, 0.184)$} exceed $10,000$ for all values of $N \ge 80$, thereby indicating exponentially slow mixing. We give the proof of Theorem \ref{Theorem 1} in Section \ref{proofcar7}. Some parts of the proof closely follow the techiniques introduced in \cite{DEMUSE2019443} and \cite{levin2008}.
	
		\textcolor{black}{Even if the function $H_{\beta,h,p}$ happens to have a unique global maximizer with negative curvature, but possibly some non-global local maximizers too, the overall mixing of the Glauber dynamics is slowed down due to the existence of these \textit{metastable states} \citep{Bresler2022MetastableMO}, from where, the dynamics takes exponentially long time to leave. From a simulation perspective, a general advice in this case, would be to start the Glauber dynamics from an initial configuration with mean very close to the global maximizer, at least from an interval around the global maximizer where $H_{\beta,h,p}$ is strictly concave, to lower the possibility of the mean-magnetization chain getting trapped at the local maximizer pockets. This prescription is based on a phenomenon in $2$-spin Ising models with zero external field described in \cite{levin2008} (see their Theorem 3), where it is shown that for $\beta > \frac{1}{2}$, the Glauber dynamics restricted to states of non-negative mean magnetization, has mixing time $\Theta(N\log N)$. For the $p=2$ case, when $\beta >\frac{1}{2}$, the function $H_{\beta,0,2}$ has two symmetric (around $0$) global maximizers, and the restricted Glauber dynamics proceeds by flipping the signs of all entries of the candidate next step if the mean magnetization of that step happens to be negative, and accepting the step otherwise. In the context of the Curie-Weiss Potts model, \cite{he1} simply rejects the next move, should it happen to lie outside the small neighborhood of the global maximizer, and the resulting dynamics exhibits fast mixing (see their Theorem 1.8).} 
			
		\textcolor{black}{In the $p$-spin Curie-Weiss context with external field, we will focus on the largest local maximizer $m_+$ of $H_{\beta,h,p}$ and construct a restricted version of the Glauber dynamics as follows. Define:
			$$R := \left\{\bm x \in \mathcal{C}_N:  \sum_{i=1}^N x_i \ge \lceil N m_-\rceil\right\}$$
			where $m_-$ denotes the largest stationary point of $H$ not exceeding $m_+$. Start with an initial configuration $\bm x^0 \in R$. At each step $t$ of the original Glauber dynamics, if the candidate next move $\bm x^{t+1} \notin R$ but $\bm x^t \in R$, set $\bm x^{t+1} := \bm x^t$ in the restricted version, and otherwise accept $\bm x^{t+1}$ as the next move of the restricted dynamics. If $\tau_{\mathbf{mix}}(\epsilon)$ denotes the mixing time of this restricted dynamics to its stationary distribution, then:} 
	
	\textcolor{black}{\begin{theorem}\label{Theorem 1mt}
	For every $\epsilon > 0$, $\tau_{\mathbf{mix}}(\epsilon) = \Theta_\epsilon(N\log N)$. 
	\end{theorem}}
	
	\textcolor{black}{The proof of Theorem \ref{Theorem 1mt} is given in Section \ref{proofmrt}.} \textcolor{black}{Using this, one can approximately simulate from a $p$-locally critical Curie-Weiss model in time $O(N\log N)$. The first step is to run parallel restricted Glauber dynamics $\bm X^{i,\bm t}$ around each \textbf{global} maximizer $m_i$ of the function $H$ (defined in an analogous way, by restricting the original dynamics on the set $[m_i-\varepsilon, m_i+\varepsilon]$ for some suitably small $\varepsilon >0$, and rejecting steps that cross the bounderies of this interval), which, in view of Theorem \ref{Theorem 1mt}, will mix to its stationary distribution $\P_{\beta, h, p}(\cdot ~| \bar{X} \in [m_i-\varepsilon,m_i+\varepsilon]$). These parallel restricted dynamics are run for $t_N := CN\log N$ steps each, for a suitably large constant $C>0$. This is followed by simulating an independent random variable $V$ having the following distribution: $$\P(V = i) = \frac{[(m_i^2-1) H''(m_i)]^{-1/2}}{\sum_j [(m_j^2-1) H''(m_j)]^{-1/2}}~,$$ and then taking $\bm X := \bm X^{V,t_N}$. In view of Theorem 2.1 (2) in \cite{Mukherjee_2021}, $\bm X$ is very nearly a sample from the Curie-Weiss measure $\P_{\beta,h,p}$ for large $N$.}
	
	\section{Proof of Theorem \ref{Theorem 1}}\label{proofcar7}
		
		\textcolor{black}{In this section, we give the proof of Theoorem \ref{Theorem 1}, divided into the $p$-regular, $p$-critical and $p$-special cases over the next three subsections.}
		
	\subsection{Proof of Theorem \ref{Theorem 1} for $p$-locally regular points $(\beta, h)$}\label{sec:regsy}
	
	We begin by deriving an expression for the expected drift of the mean magnetization chain of the Glauber dynamics.  
	
	\begin{lemma}\label{lemma_1}
		Let $c_t := \bar{X^t}$ denote the mean of $\bm X^t$, the $t^{\mathrm{th}}$ step of the Glauber dynamics. Then, 
		$$\E\left(c_{t+1}-c_t| c_t = c\right) = \frac{1}{N}\left(\lambda(c) - c\right),$$ where 
		\[
		\lambda(c) = \lambda_{\beta, h, p}(c) := \tanh\left(p\beta c^{p-1} +h\right)~.
		\]
	\end{lemma}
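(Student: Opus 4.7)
The plan is a direct computation from the one-step transition probabilities of the magnetization chain, which are explicitly given just before the lemma. Conditionally on $c_t = c$, the chain $c_{t+1}$ takes only three values: $c + 2/N$, $c - 2/N$, and $c$, with probabilities $p(c, c+2/N)$, $p(c, c-2/N)$, and $p(c,c)$, respectively. Hence
\[
\E(c_{t+1} - c_t \mid c_t = c) = \tfrac{2}{N}\, p(c, c + 2/N) - \tfrac{2}{N}\, p(c, c - 2/N),
\]
since the $c$-outcome contributes nothing to the increment.

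Next, I would substitute the explicit expressions for $p(c, c\pm 2/N)$ from the displayed transition law. Writing $u := p\beta c^{p-1} + h$ to keep the algebra compact, the right-hand side becomes
\[
\frac{1}{N}\left[(1-c)\,\frac{e^{u}}{e^{u}+e^{-u}} \;-\; (1+c)\,\frac{e^{-u}}{e^{u}+e^{-u}}\right]
= \frac{1}{N}\cdot \frac{(e^{u}-e^{-u}) - c\,(e^{u}+e^{-u})}{e^{u}+e^{-u}}.
\]
Recognizing $(e^u - e^{-u})/(e^u + e^{-u}) = \tanh(u)$ and simplifying yields $\tfrac{1}{N}(\tanh(u) - c) = \tfrac{1}{N}(\lambda(c) - c)$, which is exactly the desired identity.

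There is no real obstacle here: the only subtlety is remembering that $p(c,c)$ does not appear in the drift and keeping track of the factor $2/N$ per jump so that it cancels against the $1/2$ inside $(1 \pm c)/2$. No probabilistic tools beyond the law of total expectation on the three-point jump distribution are needed.
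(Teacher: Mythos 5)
Your computation is correct and matches the paper's proof essentially line for line: both take the expectation of the increment over the three-point jump distribution, substitute the explicit transition probabilities, and simplify via the identity $(e^u-e^{-u})/(e^u+e^{-u})=\tanh(u)$. No differences of substance.
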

	\begin{proof}
		Note that,
		\begin{eqnarray*}
			\E\left(c_{t+1}-c_t| c_t = c\right) &=& \frac{2}{N}p(c , c + 2/N) - \frac{2}{N}p(c , c - 2/N) \\
			&=& \frac{1}{N}\cdot \frac{(1-c)e^{p\beta c^{p-1} + h} -(1+c)e^{-p\beta c^{p-1} - h}}{e^{p\beta c^{p-1} + h} + e^{-p\beta c^{p-1} - h}}\\ &=& \frac{1}{N}\left(\lambda(c) - c\right)~.
		\end{eqnarray*}
	\end{proof}
	
	Following the terminologies introduced in \cite{DEMUSE2019443}, we will call a fixed point $c^*$ of the function $\lambda$ an attractor, if $|\lambda'(c^*)|<1$. We now show that if the initial mean magnetization $c_0$ lies between an attractor $c^*$ and the fixed point of $\lambda$ nearest to $c^*$, being bounded away from both these fixed points, then the mean magnetization chain experiences a systematic drift towards the attractor $c^*$ in time linear in $N$, which is referred to as the \textit{burn-in} time in \cite{DEMUSE2019443}.

	\begin{lemma}\label{thm1}
		Suppose that $\lambda(c^*) = c^*$ and $\lambda'(c^*) < 1$. Denote by $\Bar{c}$ the smallest fixed point of $\lambda$ greater than $c^*$ (in case no such $\Bar{c}$ exists, take $\Bar{c}=1$). Suppose that the initial mean magnetization $c_0$ satisfies $c^*+\delta < c_0 <\Bar{c}-\delta$ for some fixed $\delta>0$. Then, there exists $k>0$ depending only on $\delta, p, \beta, h$ such that 
		$$\P(c_{\lceil k N\rceil } < c_0-\alpha)\geq 1-e^{-\Omega (\sqrt{N})}$$
		where $\alpha>0$ is any fixed real number, satisfying $[c_0-2\alpha,c_0+\alpha]\subseteq [c^*+\delta,\bar{c}-\delta]$.
	\end{lemma}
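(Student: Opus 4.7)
The approach is to couple the random chain $c_t$ with the deterministic Euler iterates of its mean-field ODE, and exploit the exponential attraction of that ODE to $c^*$. Because $\lambda(c^*)=c^*$ with $\lambda'(c^*)<1$ and $\lambda$ has no other fixed point in $(c^*,\bar c)$, the function $f(c):=c-\lambda(c)$ is strictly positive on $(c^*,\bar c)$: the two conditions force $f>0$ immediately to the right of $c^*$, and the intermediate value theorem extends this across the whole interval. Hence the ODE $\dot\zeta=-f(\zeta)$ started at any $\zeta(0)\in(c^*+\delta,\bar c-\delta)$ decreases monotonically to $c^*$ with exponential rate $1-\lambda'(c^*)$ near the limit, and since $\bar c-\delta$ is a finite upper bound on $|\zeta(0)|$, one can fix $k=k(\delta,p,\beta,h)>0$, uniformly in the initial condition $c_0$, so that $\zeta(k)<c^*+\delta/2$.

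Next, by Lemma \ref{lemma_1}, $\Delta M_t:=c_{t+1}-c_t+f(c_t)/N$ is a martingale difference bounded by $4/N$, giving the Doob decomposition $c_t=c_0+M_t-\frac{1}{N}\sum_{s=0}^{t-1}f(c_s)$. Let $\zeta_t^{(N)}$ be the Euler iterates $\zeta_{t+1}^{(N)}=\zeta_t^{(N)}-f(\zeta_t^{(N)})/N$ with $\zeta_0^{(N)}=c_0$, which approximate $\zeta(t/N)$ with uniform error $O(1/N)$ on $[0,k]$ by standard Euler analysis. Writing $d_t:=c_t-\zeta_t^{(N)}$ and using the Lipschitz continuity of $f$ on $[-1,1]$ with constant $L=L(\beta,p)$, the identity $d_t=M_t-\frac{1}{N}\sum_{s<t}(f(c_s)-f(\zeta_s^{(N)}))$ combined with discrete Gr\"onwall yields
\[
\sup_{t\le \lceil kN\rceil}|d_t|\le e^{Lk}\sup_{t\le\lceil kN\rceil}|M_t|.
\]

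An Azuma-Hoeffding bound (or Doob's maximal inequality applied to $e^{\theta M_t}$) gives $\P(\sup_{t\le kN}|M_t|>\eta)\le C\exp(-N\eta^2/(C'k))$, so taking $\eta=\alpha/(8e^{Lk})$ produces a good event $G$ of probability at least $1-e^{-\Omega(N)}$, which is strictly stronger than the required $1-e^{-\Omega(\sqrt N)}$. On $G$,
\[
c_{\lceil kN\rceil}\le \zeta(k)+\tfrac{\alpha}{8}+O(1/N)<c^*+\tfrac{\delta}{2}+\tfrac{\alpha}{8}+O(1/N),
\]
and the hypothesis $c_0-2\alpha\ge c^*+\delta$ forces $c^*+\delta/2+\alpha/8<c_0-\alpha$ with room to spare, so $c_{\lceil kN\rceil}<c_0-\alpha$ for all sufficiently large $N$. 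The main point requiring care is the interplay between the Gr\"onwall blow-up factor $e^{Lk}$ and the Azuma threshold $\eta$: since $k$ is fixed in terms of $\delta,p,\beta,h$, both $L$ and $k$ are absolute constants for fixed parameters, so $\eta$ is a fixed multiple of $\alpha$ and the exponent in the probability bound is linear in $N$ uniformly in the choice of $\alpha$ satisfying the stated inclusion.
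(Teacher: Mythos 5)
Your proof is correct, but it takes a genuinely different route from the paper's. The paper argues directly on the magnetization chain: it introduces the partial sums $S_{t_1,t_2}=\sum_{t=t_1+1}^{t_2}(c_t-c_{t-1}+\tfrac{\gamma}{2N})\mathbbm{1}_{D_{t-1}}$, bounds $\E e^{\theta S_{t_1,t_2}}$ by an exponential supermartingale argument with $\theta=c\sqrt N$, and then takes a union bound over the $O(N^2)$ bottleneck events $B_{t_1,t_2}$ to show the chain cannot creep up by $\alpha$ and must descend by $2\alpha$ before time $\lceil kN\rceil$. Your argument instead couples $c_t$ with the Euler iterates of the mean-field ODE $\dot\zeta=-f(\zeta)$, splits $c_t$ via the Doob decomposition into the deterministic iterate plus a martingale, controls the discrepancy by discrete Gr\"onwall with the Lipschitz constant of $f$ on $[-1,1]$, and then applies Azuma--Hoeffding (plus Doob's maximal inequality) to the martingale part. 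The two approaches buy different things: your ODE-comparison route is structurally cleaner, gives quantitative uniform control over the entire trajectory $\sup_{t\le kN}|c_t-\zeta^{(N)}_t|$ in one stroke, and yields the sharper tail $e^{-\Omega(N)}$ rather than the paper's $e^{-\Omega(\sqrt N)}$; the paper's approach avoids introducing the ODE machinery and works entirely with elementary drift estimates and stopping-time combinatorics, which matches the style used again in Lemma \ref{corollary_reg} where the same events $B_{t_1,t_2}$ are reused. Two small points worth being explicit about if you were to formalize this: (i) the discrete Gr\"onwall step needs $d_0=M_0=0$ (true, since both $c$ and $\zeta^{(N)}$ start at $c_0$), and the recursion $|d_t|\le\sup_{s\le t}|M_s|+\tfrac{L}{N}\sum_{s<t}|d_s|$ then gives $|d_t|\le e^{Lk}\sup_{s\le t}|M_s|$ by the standard $(1+L/N)^t$ product; and (ii) the uniformity of $k$ over $c_0\in(c^*+\delta,\bar c-\delta)$ follows because $f$ has a strictly positive infimum on $[c^*+\delta/2,\bar c-\delta]$, so the ODE crosses this interval in bounded time regardless of where it starts, exactly as you indicate.
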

	\begin{proof}
		Since $\lambda'(c^*) < 1$, the function $\xi(c) := \lambda(c)-c$ must be strictly decreasing on $[c^*,c^*+\nu]$ for some $\nu > 0$, which implies that $\xi(c)<0$ for all $c\in (c^*,c^*+\nu]$.  Since $\xi$ cannot change sign on the interval $(c^*,\bar{c})$, it must be true that $\lambda(c)<c$ for all $c\in (c^*,\bar{c})$. Define:
		
		$$\gamma := \inf_{c \in [c^* + \delta, \bar{c} - \delta]} \frac{c-\lambda(c)}{2} >0.$$ Recall that $\alpha > 0$ is a fixed real number, satisfying $[c_0 - 2\alpha, c_0 + \alpha] \subseteq [c^* + \delta, \bar{c} - \delta]$. By Lemma \ref{lemma_1}, on the event $D_t = D_t(\alpha) := \{c_0 - 2\alpha \le c_t \le c_0 + \alpha\}$, we thus have:
		
		\[
		\mathbb{E}[c_{t+1} - c_t \mid c_t] < -\frac{\gamma}{N}~.
		\]
		Next, for all non-negative integers $t_1 \le t_2$, define:
		$$S_{t_1, t_2} := \sum_{t=t_1+1}^{t_2}(c_t-c_{t-1}+\frac{\gamma}{2N})\mathbbm{1}_{D_{t-1}}.$$
		Define $\mathcal{F}_t := \sigma(\bm X^0,\ldots,\bm X^t)$, which is the natural filtration for the Glauber dynamics. Then, we have 
		\begin{align*}
			\E[e^{\theta S_{t_1, t_2}}] &= \E[e^{\theta S_{t_1, t_2-1}}\E(e^{\theta(c_{t_2}-c_{t_2-1}+\frac{\gamma}{2N})\mathbbm{1}_{D_{t_2-1}}}|\mathcal{F}_{t_2-1})] \\
			&= \E\left[e^{\theta S_{t_1, t_2-1}}\left( 1 + \E\left[\theta\left(c_{t_2}-c_{t_2-1}+\frac{\gamma}{2N}\right)\mathbbm{1}_{D_{t_2-1}}\Big|\mathcal{F}_{t_2-1}\right] + O\left(\frac{\theta^2}{N^2}\right)\mathbbm{1}_{D_{t_2-1}}\right)\right]\\
			& \leq \E\left[e^{\theta S_{t_1, t_2-1}}\left( 1 - \frac{\gamma\theta}{2N}\mathbbm{1}_{D_{t_2-1}} + O\left(\frac{\theta^2}{N^2}\right)\mathbbm{1}_{D_{t_2-1}}\right)\right].
		\end{align*}
		Hence, by taking $\theta = c\sqrt{N}$ for some small constant $c>0$ and large $N$, we thus have: 
		$$\E(e^{\theta S_{t_1,t_2}})\leq \E(e^{\theta S_{t_1,t_2-1}})\leq \ldots \leq E(e^{\theta S_{t_1,t_1}}) = 1.$$ Therefore, by Chernoff bound,
		\begin{equation}\label{upd}
			\P(S_{t_1, t_2} \geq \alpha/2) \leq \frac{\E(e^{c\sqrt{N} S_{t_1, t_2}})}{e^{c\sqrt{N}\alpha/2}} = e^{-\Omega(\sqrt{N})}~.
		\end{equation}
		Now, define the event:
		$$B_{t_1,t_2} = B_{t_1,t_2}(\alpha) := \left(\bigcap_{t_1\leq t< t_2}D_t(\alpha)\right)\bigcap \left\{c_{t_2}-c_{t_1} > \frac{\alpha}{2}\right\}.$$
		On the event $\cap_{t_1\leq t< t_2}D_t$, we have $S_{t_1, t_2} = c_{t_2} - c_{t_1} + \frac{\gamma}{2N}(t_2-t_1)$
		and hence, $$B_{t_1, t_2}(\alpha) \subseteq \{S_{t_1,t_2} \geq \alpha/2\}.$$ Fix a sequence $T=O(N)$, whence it follows from \eqref{upd} that:
		\begin{equation}\label{llpd}
			\P\left(\bigcup_{0\leq t_1< t_2 \leq T}B_{t_1, t_2}\right)\leq e^{-\Omega(\sqrt{N})}~.
		\end{equation}
		Now, suppose that $c_t > c_0 +\alpha$ for some $0\le t\le T$.
		Define: $$t_1 := \max\{0\le s< t: c_s \le c_0\}\quad\text{and}\quad t_2 := \min \{t_1<s\le T: c_s >c_0+\alpha\}.$$
		Since $c_{t_1} \le c_0$ and $|c_s-c_{s-1}|\le 2/N$ for all $N$, we must have $t_1 <t- N\alpha/2$. Hence, for all large $N$, $t_2$ exists. Also, since $c_{t_2}>c_0+\alpha$, one must have $t_2>t_1+N\alpha/2$. Note that for all large $N$, $$c_0 < c_s \le c_0+\alpha \quad\text{for all} \quad t_1 < s < t_2\quad\quad\text{and} \quad \quad c_{t_2-1}-c_{t_1+1} >\alpha-\frac{4}{N} >\frac{\alpha}{2}~.$$ 
		Hence, for all large $N$,
		$$\bigcup_{t=0}^T \{c_t >c_0+\alpha\} \subseteq \bigcup_{0\leq t_1< t_2 \leq T}B_{t_1, t_2}$$
		and hence, it follows from \eqref{llpd} that:
		\begin{equation}\label{lnpg}
			\P\left( c_t>c_0 + \alpha \text{ for some } 0\leq t \leq T \right)\leq e^{-\Omega(\sqrt{N})}.
		\end{equation}
		This step thus shows that the mean magnetization chain stays away from $\Bar{c}$ before time $T = O(N)$ with high probability. Next, note that:
		\begin{equation}\label{kl1}
			\P\left(c_t\ge c_0 - 2\alpha \text{ for all } 0\leq t \leq T \right) \leq \P\left(\bigcap_{0\leq t\leq T}D_t(\alpha)\right) + \P\left( c_t>c_0 + \alpha \text{ for some } 0\leq t \leq T \right).
		\end{equation}
		Now, since on the event $\cap_{0\leq t\leq T}D_t$, $$S_{0,T} = \sum_{t=1}^{T}\left(c_t - c_{t-1} + \frac{\gamma}{2N}\right) = c_T - c_0 + \frac{\gamma}{2N}T \geq -2 + \frac{\gamma}{2N}T~,$$ we have:
		\begin{equation}\label{kl2}
			\bigcap_{0\leq t\leq T}D_t\subseteq \left\{ S_{0,T}\geq -2 + \frac{\gamma}{2N}T\right\}.
		\end{equation} 
		Now, by a Chernoff bound, we have:
		\begin{equation}\label{kl3}
			\P\left(S_{0,T}\geq -2 + \frac{\gamma}{2N}T\right) \le \frac{\E \exp[\theta S_{0,T}]}{\exp\left[\theta \left(-2+\frac{\gamma}{2N}T\right)\right]} =e^{-\Omega(\sqrt{N})}
		\end{equation}
		if $T=\lceil k N\rceil$ with $k >5/\gamma$. Combining \eqref{lnpg}, \eqref{kl1}, \eqref{kl2} and \eqref{kl3}, we have:
		\begin{equation}\label{kl4}
			\P\left(c_t\ge c_0 - 2\alpha \text{ for all } 0\leq t \leq T \right) \le e^{-\Omega(\sqrt{N})}.
		\end{equation}
		Next, note that by an exactly similar argument as in \eqref{lnpg}, we have:
		\begin{equation}\label{lnpd}
			\P(c_T > c_t+\alpha~\text{for some}~0\le t\le T) \le e^{-\Omega(\sqrt{N})}
		\end{equation}
		Combining \eqref{kl4} and \eqref{lnpd}, we have:
		\begin{eqnarray*}
			\P(c_T \geq c_0 - \alpha) &\leq&  \P\left(c_T\geq c_0 - \alpha \text{ and } c_t<c_0 - 2\alpha \text{ for some } 0\leq t \leq T \right)\\ &+& \P(c_t\geq c_0-2\alpha \text{ for all } 0\leq t \leq T) \leq e^{-\Omega(\sqrt{N})}. 
		\end{eqnarray*}
		The proof of Lemma \ref{thm1} is now complete.
	\end{proof}
	
	By applying Lemma \ref{thm1} repeatedly, we can show that after each burn-in epoch (taking time $O(N)$), the mean magnetization chain moves closer to the attractor $c^*$ with exponentially high probability. This will ensure that the chain visits every small neighborhood of $c^*$ after a finite number of burn-in epochs. The next lemma shows that in each of these neighborhoods, it spends at least an exponentially long time with exponentially high probability. 
	
	\begin{lemma}\label{corollary_reg}
		Suppose that $c^*$ is the unique fixed point of $\lambda$, and $\lambda^{'}(c^*)<1.$ Then, for any $\varepsilon >0,$ there exists $k,y >0$ such that for any initial configuration with mean magnetization $c_0$, 
		
		$$\P\left(\bigcap_{t=kN}^{e^{y\sqrt{N}}} \{c_*-\varepsilon <c_t<c^*+\varepsilon\}\right) \ge 1-e^{-\Omega(\sqrt{N})}.$$
	\end{lemma}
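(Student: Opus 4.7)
The plan is to split the argument into two stages: a driving phase that brings the chain into a small neighborhood of $c^*$, and a trapping phase that confines it there for exponentially long. In the driving phase, I would exploit that $c^*$ is the unique fixed point of $\lambda$ with $\lambda'(c^*)<1$, so that the drift $\lambda(c)-c$ vanishes only at $c^*$ and has the correct restoring sign throughout $(-1,1)$. Starting from any $c_0$, I would iteratively apply Lemma~\ref{thm1} together with its symmetric counterpart for $c_0 < c^*$ (which follows from the same martingale argument with signs reversed): each application of length $\lceil k_1 N \rceil$ moves the chain at least a fixed amount $\alpha$ closer to $c^*$ with failure probability $e^{-\Omega(\sqrt{N})}$. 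After $O(1/\alpha)$ such epochs, a finite number depending only on $\varepsilon,\beta,h,p$, the chain enters $(c^*-\varepsilon/2,\, c^*+\varepsilon/2)$ by some time $kN$, with cumulative failure probability $e^{-\Omega(\sqrt{N})}$.

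For the trapping phase, I would tile $[kN,\, e^{y\sqrt N}]$ into consecutive blocks of length $L = \lceil k_2 N \rceil$, giving at most $e^{y\sqrt N}/L$ blocks. I would prove inductively that if the chain enters a block in $(c^*-\varepsilon/2,\, c^*+\varepsilon/2)$, then, with probability at least $1-e^{-c\sqrt N}$, it both (i) stays in $(c^*-\varepsilon,\, c^*+\varepsilon)$ throughout the block, and (ii) ends the block back in $(c^*-\varepsilon/2,\, c^*+\varepsilon/2)$. Step (i) adapts the Chernoff bound behind \eqref{lnpg}: any exit through $c^* \pm \varepsilon$ forces a sub-excursion of amplitude $\varepsilon/2$ originating in a ``ring'' where $|\lambda(c)-c|$ is uniformly bounded below by some $\gamma>0$ with the restoring sign, yielding the $e^{-\Omega(\sqrt N)}$ bound. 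Step (ii) follows by applying Lemma~\ref{thm1} to the sub-epochs when the chain lies outside $(c^*-\varepsilon/3,\, c^*+\varepsilon/3)$, while inside this smaller interval the chain cannot drift far in $L$ steps because of the deterministic increment bound $2/N$. A union bound over blocks, with $y$ strictly smaller than the per-block exponent constant $c$, would then yield the required overall failure probability $e^{-\Omega(\sqrt N)}$.

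The main obstacle will be the per-block estimate when the chain starts very close to $c^*$, since Lemma~\ref{thm1} requires the starting point to be bounded away from $c^*$ by a fixed $\delta$ in order to exploit a positive uniform lower bound on $|\lambda(c)-c|$, and this bound degenerates near $c^*$. The remedy I plan is to decompose each block's trajectory into excursions outside and inside the inner interval $(c^*-\varepsilon/3,\, c^*+\varepsilon/3)$: on the complement the Chernoff/drift argument of Lemma~\ref{thm1} applies verbatim, while inside this inner interval the chain's displacement is controlled by a martingale concentration argument using only the deterministic step size $2/N$ together with the fact that the drift, though small, is never destabilizing. Stitching these two regimes together and then propagating the failure probability across $e^{y\sqrt N}/L$ blocks, in the spirit of \cite{levin2008} and \cite{DEMUSE2019443}, is the core technical work of the lemma.
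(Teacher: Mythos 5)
Your overall strategy (drive, then trap) matches the paper's, and the driving phase is essentially identical: iterate Lemma~\ref{thm1} and its mirror image over finitely many burn-in epochs to bring the magnetization into a small neighborhood of $c^*$ by time $O(N)$, with $e^{-\Omega(\sqrt N)}$ failure probability. Where you diverge is in organizing the trapping phase. You tile $[kN, e^{y\sqrt N}]$ into $O(N)$-length blocks and propose an inductive per-block confinement-and-regeneration claim; the difficulty you correctly flag---that Lemma~\ref{thm1} needs the starting magnetization bounded away from $c^*$---is a real one for your decomposition, and your proposed fix (excursion decomposition around an inner interval plus the deterministic step bound inside) would require genuine extra work to make rigorous, especially the claim (ii) that the chain returns to the inner interval by the end of each block. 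The paper sidesteps all of this with a much lighter device: it defines $T$ to be the \emph{first} entry time into $[-1, c^*+\varepsilon/2)$, so that automatically $c_T \in [c^*+\varepsilon/3, c^*+\varepsilon/2)$ (since the chain was at $\geq c^*+\varepsilon/2$ one step earlier and moves by at most $2/N$), i.e.\ at time $T$ the magnetization is already bounded away from $c^*$. It then applies a single union bound over the $(e^{z\sqrt N})^2$ up-crossing events $B_{T+t_1,T+t_2}(\varepsilon/7)$ from the proof of Lemma~\ref{thm1}, each with probability $\le e^{-\phi\sqrt N}$, to conclude that the chain never exceeds $c^*+\varepsilon$ before time $e^{z\sqrt N}$; the lower confinement follows by symmetry. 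No block decomposition, no induction, and no need to control the chain near $c^*$. Both routes can be made to work, but the paper's choice of observation time $T$ at the \emph{boundary} (rather than anywhere inside the target interval) is the key simplification; if you pursue your version, the per-block regeneration step is where the rigor needs the most attention.
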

	
	\begin{proof}
		We only prove Lemma \ref{corollary_reg} for the case $c_0>c_*$, as the other case is handled similarly. By repeated applications of Lemma \ref{thm1} and using the strong Markov property, we conclude that the mean magnetization chain visits the neighborhood $[-1,c^*+\varepsilon/2)$ within time $O(N)$ (at the end of finitely many burn-in epochs) with probability at least $1-e^{-\Omega(\sqrt{N})}$. Let $T$ denote the first time of this visit. Then, there exists $\ell>0$ such that $\P(T \le \ell N) \ge 1-e^{-\Omega(\sqrt{N})}$. Note that $c_{T} \ge c_*+\varepsilon/2-2/N$ and hence, we can take $N$ large enough, so that $c_{T} \ge c_*+\varepsilon/3$.
		
		Now, the proof of Lemma \ref{thm1} coupled with the strong Markov property, implies that for any $\alpha >0$ and time $T'$,
		$$\bigcup_{t=0}^{T'} \{c_{T+t} >c_T+\alpha\} \subseteq \bigcup_{0\leq t_1< t_2 \leq T'}B_{T+t_1, T+t_2}(\alpha)~.$$ Taking $\alpha := \varepsilon/7$, we once again conclude from the proof of Lemma \ref{thm1} that there exists exists a constant $\phi := \phi_\varepsilon >0$, such that for every $0\leq t_1<t_2 \leq T'$, 
		$\P(B_{T+t_1,T+t_2}) \le e^{-\phi\sqrt{N}}$, and hence,
		$$\P\left(\bigcup_{0\leq t_1< t_2 \leq T'}B_{T+t_1, T+t_2}\right)\leq T'^2 e^{-\phi \sqrt{N}} \le e^{-\Omega(\sqrt{N})}$$
		if $T' = e^{z\sqrt{N}}$ with $z<\phi/2$. Hence, we have:
		$$\P\left(\bigcup_{t=0}^{e^{z\sqrt{N}}} \{c_{T+t} \ge c^* +\varepsilon\}\right) \le \P\left(\bigcup_{t=0}^{e^{z\sqrt{N}}} \left\{c_{T+t} > c_T +\frac{\varepsilon}{7}\right\}\right) \le e^{-\Omega(\sqrt{N})}~.$$
		Hence, we have:
		\begin{eqnarray*}
			\P\left(\bigcup_{t=\ell N}^{e^{z\sqrt{N}}} \{c_{t} \ge c^* +\varepsilon\}\right) &\le& \P\left(T \le \ell N~,~\bigcup_{t=\ell N}^{e^{z\sqrt{N}}} \{c_{t} \ge c^* +\varepsilon\}\right) + \P(T>\ell N)\\ &\le& \P\left(\bigcup_{t=T}^{T+e^{z\sqrt{N}}} \{c_{t} \ge c^* +\varepsilon\}\right) + \P(T>\ell N) = e^{-\Omega(\sqrt{N})}.
		\end{eqnarray*}
		By an exactly similar argument, we can conclude that there exists $m,w>0$, such that:
		$$ \P\left(\bigcup_{t= mN}^{e^{w\sqrt{N}}} \{c_{t} \le c^* -\varepsilon\}\right) \le e^{-\Omega(\sqrt{N})}.$$ Lemma \ref{corollary_reg} now follows on taking $k := \max\{\ell,m\}$ and $y=\min \{z,w\}$.
	\end{proof}

	We now construct a coupling of the Glauber dynamics $\bm X^t$ with another chain $\bm Y^t$ with the same transition probability structure, but with possibly different starting configurations. Towards this, suppose that $\bm x$ and $\bm y$ are two spin configurations, and set $\bm X^0=\bm x$ and $\bm Y^0 = \bm y$. Now, inductively assume that the joint law of $(\bm X^0, \bm Y^0),\ldots,(\bm X^t,\bm Y^t)$ has been defined for some $t$. Given this joint law, generate $U_{t+1}$ uniformly from $[0,1]$ (independent of all the random variables considered so far), followed by selecting a vertex $j$ uniformly at random, and defining
	\[
	X_j^{t+1} = \begin{cases} 
		1 & \text{if } U_{t+1} \leq f(\bar{X}^t)\\
		-1 & \text{otherwise}
	\end{cases}
	\quad\quad \text{and} \quad\quad
	Y_j^{t+1} = \begin{cases} 
		1 & \text{if } U_{t+1} \leq f(\bar{Y}^t)\\
		-1 & \text{otherwise}
	\end{cases}
	\]
	keeping all the other coordinates of $\bm X^{t+1}$ and $\bm Y^{t+1}$ unchanged from those of $\bm X^{t}$ and $\bm Y^{t}$, respectively, where:
	$$f(t) := \frac{1+\tanh(p\beta t^{p-1}+h)}{2}~.$$
	A crucial step towards analyzing the mixing time of the Glauber dynamics is to estimate the expected Hamming distance ($\rho$) between $\bm X^t$ and $\bm Y^t$. Towards this, we start with the case $t=1$, under the much simpler assumption that the initial configurations are differing at just one coordinate.
	
	\begin{lemma}\label{lemma_reg}
		Assume that $\sup_{c\in [-1,1]}|\lambda^{'}(c)|<1$. Then, there exists $\delta >0$ depending only on $\beta,h,p$, such that for every possible pair $(\bm x,\bm y)$ of starting configurations of the the coupled chain $(\bm X^t,\bm Y^t)$, we have:
		$$\E \rho(\bm X^t, \bm Y^t)\leq \rho(\bm x, \bm y) e^{-t\delta/N}\quad\quad\text{for all large}~N.$$
	\end{lemma}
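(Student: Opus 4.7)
The plan is a path-coupling argument in the sense of Bubley and Dyer, built on the one-step coupling just defined in the paper.

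First, I would establish a one-step contraction in the base case $\rho(\bm x, \bm y) = 1$. Say $\bm x$ and $\bm y$ agree on all coordinates except index $i$, so $|\bar{x} - \bar{y}| = 2/N$. Under the coupling, the vertex $j$ and the uniform variable $U_{t+1}$ are shared across the two chains, so I split on $\{j = i\}$ versus $\{j \neq i\}$. On $\{j = i\}$, which occurs with probability $1/N$, the chains coalesce at coordinate $i$ precisely when $U_{t+1}$ avoids the interval with endpoints $f(\bar{x})$ and $f(\bar{y})$, i.e.\ with probability $1 - |f(\bar{x}) - f(\bar{y})|$, so the Hamming distance drops by $1$. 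On $\{j \neq i\}$, coordinate $j$ initially agreed, and a new discrepancy is created with probability $|f(\bar{x}) - f(\bar{y})|$. Setting $L := \sup_{c \in [-1,1]}|\lambda'(c)| < 1$ and using $2f = 1 + \lambda$ together with the mean value theorem gives $|f(\bar{x}) - f(\bar{y})| \le (L/2)|\bar{x} - \bar{y}| = L/N$. Aggregating both cases,
\begin{align*}
\E\bracket{\rho(\bm X^1,\bm Y^1)} - 1 \;=\; -\frac{1}{N} + |f(\bar{x}) - f(\bar{y})| \;\le\; -\frac{1-L}{N}.
\end{align*}

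Next, I would extend this to arbitrary pairs by path coupling. Given $(\bm x, \bm y)$ with $k := \rho(\bm x, \bm y)$, interpolate through $\bm x = \bm z_0, \bm z_1, \ldots, \bm z_k = \bm y$ with each consecutive pair differing in a single coordinate, and couple all $k$ consecutive pairs simultaneously using the common random vertex $j$ and the common uniform $U_{t+1}$. The marginal coupling on each consecutive pair is precisely the one described in the lemma, so the base-case bound applies to each; the triangle inequality then yields
\begin{align*}
\E\rho(\bm X^1, \bm Y^1) \;\le\; \sum_{i=1}^{k} \E\rho(\bm Z_{i-1}^1, \bm Z_i^1) \;\le\; k\paren{1 - \frac{1-L}{N}} \;=\; \rho(\bm x, \bm y)\paren{1 - \frac{1-L}{N}}.
\end{align*}

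Finally, by the Markov property, conditioning on $(\bm X^{t-1}, \bm Y^{t-1})$ and iterating the one-step contraction, with $\delta := 1-L > 0$,
\begin{align*}
\E\rho(\bm X^t, \bm Y^t) \;\le\; \rho(\bm x, \bm y) \paren{1 - \frac{\delta}{N}}^t \;\le\; \rho(\bm x, \bm y)\, e^{-t\delta/N},
\end{align*}
which is the claim. The only real subtlety is the path-coupling step: the coupling specified in the statement of the lemma must admit a simultaneous extension to all pairs along the interpolating path. This is immediate here, because the update rule depends only on the single shared vertex $j$ and the single shared uniform variable $U_{t+1}$, both of which can be reused across every consecutive pair of the path, so the marginal coupling induced on each consecutive pair $(\bm Z_{i-1}, \bm Z_i)$ agrees with the one in the lemma.
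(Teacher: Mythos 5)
Your proof is correct and reaches exactly the paper's contraction rate $\delta = 1 - \sup_{c\in[-1,1]}|\lambda'(c)|$, but it organizes the one-step estimate differently. You prove the drift bound only in the base case $\rho(\bm x, \bm y) = 1$ and then lift it to general Hamming distance by path coupling along a geodesic, summing $k$ single-coordinate estimates. The paper instead computes the one-step expected Hamming distance directly for a pair at arbitrary distance $k$: conditioning on whether the shared update index $j$ is among the $k$ discrepant coordinates or not, it obtains $\E\rho(\bm X^1, \bm Y^1) = k + \xi - k/N$ with $\xi := |f(\bar{x}) - f(\bar{y})|$, and then applies the mean value theorem once to bound $\xi \le k \sup|\lambda'|/N$. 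In both cases the engine is the same grand coupling driven by the shared pair $(j, U_{t+1})$, the iteration via the Markov property is identical, and the final bounds coincide. The paper's direct computation is marginally shorter and sidesteps the (small) extra point you correctly flag---that the common-randomness coupling admits a simultaneous extension to all consecutive pairs along the interpolating path---whereas your path-coupling route is more modular, isolating the distance-$1$ analysis, and would transfer more readily to dynamics where the joint transition law of a distance-$k$ pair is harder to write out in closed form.
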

	
	\begin{proof}
		Suppose that $\rho(\bm x,\bm y) = k$, i.e. $\bm x$ and $\bm y$ are differing at exactly $k$ coordinates. If the randomly chosen index $j$ in the first step of the Glauber dynamics happens to be one of these $k$ indices, then $\rho(\bm X^1,\bm Y^1)$ would be $k$ with probability $\xi := |f(\bar{x})-f(\bar{y})|$ and $k-1$ with probability $1-\xi$. On the other hand, if $j$ happens to be one of those indices at which $\bm x$ and $\bm y$ agree, then $\rho(\bm X^1,\bm Y^1)$ would be $k+1$ with probability $\xi$ and $k$ with probability $1-\xi$. Hence, 
		\begin{eqnarray*}
			\mathbb{E} \rho(\bm X^1, \bm Y^1) &=& \frac{k}{N}\left(k\xi + (k-1)(1-\xi)\right) + \left(1-\frac{k}{N}\right)\left((k+1)\xi + k(1-\xi)\right)\\ &=& k+\xi - \frac{k}{N} \\&\le & k\left(1- \frac{1-\sup_{c\in [-1,1]}~|\lambda'(c)|}{N}\right)\\&\le & k \exp\left(-\frac{1-\sup_{c\in [-1,1]}|\lambda'(c)|}{N}\right)
		\end{eqnarray*}
		This proves Lemma \ref{lemma_reg} for $t=1$, with $\delta := 1-\sup_{c\in [-1,1]}|\lambda'(c)|$. What this implies, is that at any step $t$, we have:
		$$\E \left[\rho(\bm X^t,\bm Y^t)\Big| \bm X^{t-1}, \bm Y^{t-1}\right] \le \rho(\bm X^{t-1},\bm Y^{t-1}) e^{-\delta/N} ~\implies~ \E \rho(\bm X^t,\bm Y^t) \le e^{-\delta/N} \E  \rho(\bm X^{t-1},\bm Y^{t-1})~.$$ The above recursion inequality gives:
		$$\E \rho(\bm X^t,\bm Y^t)  \le e^{-t\delta/N} \rho(\bm x,\bm y)~.$$
		This completes the proof of Lemma \ref{lemma_reg}.
	\end{proof}
	
	
	Note that the assumption $\sup_{c\in [-1,1]} |\lambda'(c)| < 1$ in Lemma \ref{lemma_reg} is too strong, and should preferably be replaced by the more realistic assumption $|\lambda'(c^*)|<1$, where $c^*$ is the unique fixed point of $\lambda$. 
	In fact, the latter assumption, in view of continuity of $\lambda'$, implies that $\sup_{c\in [c^*-\varepsilon,c^*+\varepsilon]} |\lambda'(c)|<1$ for some $\varepsilon > 0$, and by Lemma \ref{corollary_reg}, we know that with probability at least $1-e^{-\Omega(\sqrt{N})}$, after $O(N)$ time, both the mean magnetization chains corresponding to $\bm X^t$ and $\bm Y^t$ enter the interval $[c^*-\varepsilon,c^*+\varepsilon]$ and stay there for at least another $e^{\Omega(\sqrt{N})}$ amount of time. We wait throughout these initial burn-in epochs for this to happen at the cost of spending $O(N)$ time. Then, we completely forget about the history of the chains, thereby starting afresh once the mean magnetization chains already enter the interval $[c^*-\varepsilon,c^*+\varepsilon]$ and stays there for at least another $e^{\Omega(\sqrt{N})}$ amount of time. In this sense, Lemma \ref{lemma_reg} still holds under the milder assumption $|\lambda'(c^*)|<1$ for the unique fixed point $c^*$ of $\lambda$, at least for all $t \le e^{\kappa\sqrt{N}}$ for some $\kappa > 0$.
	
	By Lemma \ref{lemma_reg}, if $|\lambda'(c^*)| <1$, then $\E \rho(\bm X^t, \bm Y^t) \le N e^{-t\delta/N}$ for all large $N$, and hence, this expected Hamming distance can be controlled below any threshold $\epsilon>0$ as long as $t>(N/\delta)\log(N/\epsilon)$. In fact, we can formally apply Theorem 14.6 and Corollary 14.7 of \cite{LevinPeresWilmer2006} to conclude that:
	$$\tm(\epsilon) \le \left\lceil \frac{N}{\delta}\log\frac{N}{\epsilon}\right\rceil = O_\epsilon(N\log N)~.$$
	Note that the initial $O(N)$ time of burn-in does not affect this result, as $O(N\log N) + O(N)$ is still $O(N\log N)$. In view of Lemma \ref{regcht1}, we therefore just need to handle the case $\lambda'(c^*) \le -1$ in order to complete the proof of the upper bound part of Theorem \ref{Theorem 1} for locally regular points. Below, we give a proof for this case, that in fact encompasses the entire regime $\lambda'(c^*) <1$.

	{\color{black} Define $e_t := c_t - c^*$, and suppose that $\varepsilon > 0$ be such that $\delta := \sup_{x\in [c^*-\varepsilon, c^* + \varepsilon]}\lambda^{'}(x)<1.$  Assume that $e_0 > \mu$ for some fixed $\mu > 0$, and define $\tau_0 = \text{min}\{t\geq 0: e_t\leq 1/N\}.$ 
		It follows from Lemma \ref{corollary_reg} that with probability $1-e^{-\Omega(\sqrt{N})}$, the chain $e_t$ remains in the interval $(-\varepsilon, \varepsilon)$ from time $O(N)$ till time $e^{\Omega(\sqrt{N})}$.
		Once the chain $e_t$ enters the interval $(-\varepsilon,\varepsilon)$ in time $O(N)$, we have:
		\begin{align*}
			\E((e_{t+1} - e_t) \mathbbm{1}_{\{\tau_0 > t\}}|\mathscr{A}_t) &= \frac{1}{N}(\lambda(e_t + c^*) - e_t - c^*)\mathbbm{1}_{\{\tau_0 > t\}}\\ &= \frac{1}{N}(\lambda(c^*) + e_t\lambda^{'}(\xi) - e_t - c^*)\mathbbm{1}_{\{\tau_0 > t\}}\\
			&\leq \frac{1}{N}(\delta -1)e_t \mathbbm{1}_{\{\tau_0 > t\}},\\
		\end{align*}
		where $\mathscr{A}_t := \sigma(e_0,\ldots,e_t)$ and $\xi$ lies between $c^*$ and $c_t$. Hence, 
		\begin{align*}
			\E(e_{t+1}\mathbbm{1}_{\{\tau_0 > t+1\}}|\mathscr{A}_t) \leq \E(e_{t+1}\mathbbm{1}_{\{\tau_0 > t\}}|\mathscr{A}_t)  = e_t\mathbbm{1}_{\{\tau_0 > t\}} \left(1 - \frac{1-\delta}{N}\right)
		\end{align*}
		Hence by iterating , we have:
		\begin{align*}
			\E(e_{t} \mathbbm{1}_{\{\tau_0>t\}})  \leq \left(1 - \frac{1-\delta}{N}\right)^t\leq e^{- \frac{t}{N}\left(1- \delta\right)}
		\end{align*}
		Putting $t = cN \log N$, we get:
		\begin{align*}
			\P(\tau_0 > t) \leq \P\left(e_t \mathbbm{1}_{\{\tau_0>t\}}> \frac{1}{N}\right) \leq N\E\left(e_t \mathbbm{1}_{\{\tau_0>t\}}\right)  \leq \frac{1}{N^{c\left(1- \delta\right)-1}}
		\end{align*}
		Since $1-\delta >0$, we have $\lim_{c\rightarrow \infty}\P(\tau_0 > cN\log N) = 0$, uniformly in $N.$ Then, we can apply the same arguments in the proof of Theorem 4.1 in \cite{levin2008} to complete the proof of the upper bound part of Theorem \ref{Theorem 1}.}

	For proving the lower bound, we use Theorem 3.1 (1) in \cite{Mukherjee_2021} to conclude that $$\bar{X}-c^* = O_{\P_{\beta,h,p}}(N^{-1/2}).$$
	If $L_t$ denotes the number of indices not selected by the Glauber dynamics by time $t$, then it follows from Lemma 7.12 in \cite{LevinPeresWilmer2006}, that:
	$$\E (L_t) = N\left(1-\frac{1}{N}\right)^t\quad\quad\text{and}\quad\quad \mathrm{Var} (L_t) \le \E (L_t)~.$$ This implies that at $t_N:=N\log N/4$, we have $\E(L_{t_N}) \sim N^{3/4}$, and hence,
	\begin{eqnarray*}
		\P\left(L_{t_N} \le \alpha \E L_{t_N}\right) &\le& \P\left(|L_{t_N}-\E L_{t_N}| \ge (1-\alpha) \E L_{t_N} \right)\\ &\le& \frac{\mathrm{Var} (L_{t_N})}{(1-\alpha)^2 \E^2 (L_{t_N})}\\ &\le& \frac{1}{(1-\alpha)^2 \E(L_{t_N})} = O_\alpha(N^{-3/4})
	\end{eqnarray*}
	for every $\alpha \in (0,1)$. 
	
	Now, consider the coupled chain $(\bm X^t,\bm Y^t)$ as constructed before, but this time, $\bm X^0$ is the vector of all $+1$s and $\bm Y^0$ is the vector of all $-1$s. If $\eta$ denotes the count function of the number of $+1$s in a configuration, then $\eta(\bm X^t)-\eta(\bm Y^t) \ge L_t$ and hence,
	$$\P\left(\eta(\bm X^{t_N}) - \eta(\bm Y^{t_N}) \le \alpha N^{3/4}\right) = O_\alpha(N^{-3/4})~.$$
	Noting that $\eta(\bm x) = N(1+\bar{x})/2$ for $\bm x \in \{-1,1\}^N$, we thus conclude that:
	$$\P\left(\bar{X}^{t_N} - \bar{Y}^{t_N} \le 2\alpha N^{-1/4}\right) = O_\alpha(N^{-3/4})~.$$ 
	We thus have $\P(\bar{X}^{t_N} - \bar{Y}^{t_N} \le 2\alpha N^{-1/4}) \rightarrow 0$, and hence by triangle inequality, the intersection of the events $E_{N} := \{|\bar{X}^{t_N} - c^*| \le \alpha N^{-1/4}\}$ and $F_{N} :=\{|\bar{Y}^{t_N} - c^*| \le \alpha N^{-1/4}\}$ has asymptotic probability $0$, which implies that either $E_N$ or $F_N$ (without loss of generality, let it be $E_N$) has probability at most $\frac{1}{2} + o(1)$. Hence, we have the following at $t_N = N\log N/4$:
	
	$$\max_{\bm x\in \mathcal{C}_N} d_{TV}\left(\P_{\bm x}(\bm X^{t_N} \in \cdot), \P_{\beta,h,p}\right) \ge \P_{\beta,h,p}(|\bar{X} - c^*|\le \alpha N^{-1/4}) - \P(E_N) \ge 1-\frac{1}{2} + o(1) = \frac{1}{2} + o(1)$$
	which completes the proof of the lower bound.

	\subsection{Proof of Theorem \ref{Theorem 1} for $p$-locally critical points $(\beta, h)$}
	To begin with, for each $k\in \{-N,-N+1,\ldots,N-1,N\}$, define $$S_k := \left\{\bm x\in \{-1,1\}^N: \sum_{i=1}^N x_i = k\right\}.$$
	Note that $S_{k}=\emptyset$ if $k+N$ is odd. Let \( m_1 \) denote the smallest local maximizer and \( m_2 \) the largest local maximizer of $H$. Since $H'(-1) =\infty$ and $H'(1)=-\infty$, there exists \( \varepsilon > 0 \) such that \( m_1 + \varepsilon < m_2 - \varepsilon \), \( H(m) < H(m_1) \) for all $m \in [-1,m_1+\varepsilon]\setminus \{m_1\}$, and \( H(m) < H(m_2) \) for all $m \in [m_2-\varepsilon,1]\setminus\{m_2\}$. Define:
	\[
	A := \bigcup_{k=-N}^{\lfloor (m_1+\varepsilon) N\rfloor} S_{k} \quad \quad \quad \text{and}\quad \quad \quad B:= \bigcup_{k=\lfloor (m_2-\varepsilon) N\rfloor}^{N} S_{k}~.
	\]
	Choose $N$ large enough such that $\lfloor (m_1+\varepsilon) N\rfloor < \lfloor (m_2-\varepsilon) N\rfloor$, whence $A\cap B =\emptyset$. Hence, one of $\P(A)$ and $\P(B)$ must be bounded by $1/2$, where $\P := \P_{\beta,h,p}$. Without loss of generality, assume that $\P(A) \le 1/2$. Let $P(\cdot,\cdot)$ denote the transition matrix of the Glauber dynamics $\bm X^t$. Define:
	$$Q(\bm x, \bm y) = \P(\bm x) P(\bm x,\bm y)\quad\quad\text{and}\quad \quad Q(A,B) := \sum_{\bm x \in A, \bm y \in B} Q(\bm x, \bm y)~.$$ Then, we have:
	$$\frac{Q(A,A^c)}{\P(A)} = \frac{1}{\P(A)}\sum_{k=-N}^{\lfloor (m_1+\epsilon)N\rfloor  } \sum_{\bm x \in S_k} \P(\bm x) \sum_{j = \lfloor (m_1+\epsilon)N\rfloor + 1}^N \sum_{\bm y \in S_j} P(\bm x, \bm y)$$
	The only non-zero transition probabilities in the above expression are contributed by transitions from $S_{\lfloor (m_1+\varepsilon)N\rfloor_{\#}}$ to $S_{(\lfloor (m_1+\varepsilon)N\rfloor+1)^{\#}}$, where for an integer $k$, we define $k_{\#}$ and $k^{\#}$ to be the integers $i$ nearest to $k$, not exceeding and not below $k$, respectively, such that $i+N$ is even. Without loss of generality, let us assume that $\lfloor (m_1+\varepsilon)N\rfloor + N$ is even. Then, 
	$$\frac{Q(A,A^c)}{\P(A)} = \frac{1}{\P(A)} \sum_{\bm x \in S_{\lfloor (m_1+\varepsilon)N\rfloor}} \P(\bm x)\sum_{\bm y \in S_{\lfloor (m_1+\varepsilon)N\rfloor+2}} P(\bm x, \bm y) \le \frac{\P(S_{\lfloor (m_1+\varepsilon)N\rfloor})}{\P(A)}~.$$
	Let $\bm X$ be a random spin configuration drawn from the measure $\P$, and let $\bar{X}$ denote its mean. Then, we have the following for all large $N$:
	$$\frac{\P(S_{\lfloor (m_1+\varepsilon)N\rfloor})}{\P(A)} \le \P\left(\bar{X} > m_1+\frac{\varepsilon}{2}\Big| \bar{X}  \le \frac{\lfloor (m_1+\epsilon)N\rfloor}{N}\right)~.$$
	The right side of the above inequality, in view of the proof of Lemma 3.7 in \cite{Mukherjee_2021}, can be bounded by:
	$$e^{N\left(H\left(m_1+\frac{\varepsilon}{2}\right)- H(m_1)\right)} O(N^{3/2}) = e^{-\Omega(N)}$$
	for all $\varepsilon > 0$ sufficiently small. Combining everything, we have:
	$$\frac{Q(A,A^c)}{\P(A)} \le e^{-\Omega(N)}$$
	Thus, the bottleneck ratio of the Glauber dynamics (see  Section 7.2 of \cite{LevinPeresWilmer2006} for details) is bounded by:
	$$\Phi_* := \min_{A:~ \P(A)\le 1/2} \frac{Q(A,A^c)}{\P(A)} \le e^{-\Omega(N)}~.$$
	By Theorem 7.3 in \cite{LevinPeresWilmer2006}, there exists a constant $C_\varepsilon>0$ such that:
	$$\tm (\varepsilon) \ge \frac{C_\varepsilon}{\Phi_*} \ge e^{\Omega_{\varepsilon}(N)}~,$$
	which completes the proof of Theorem \ref{Theorem 1} for locally critical points.

	\subsection{Proof of Theorem \ref{Theorem 1} for $p$-special points  $(\beta, h)$}
	By Lemma \ref{special_lemma}, the function $\lambda$ has a unique fixed point $c^*$ and $\lambda'(c^*) = 1$.  Define $e_t := c_t-c^*$, whence by Lemma \ref{lemma_1}, we have:
	\begin{align*}
		\E(e_{t+1}|e_t) = \left(1-\frac{1}{N}\right)e_t + \frac{1}{N}\left(\lambda(c_t)-c^*\right).
	\end{align*} 
	and if $e_t\ne 0$, then for all large $N$, we have:
	\[   
	\E(|e_{t+1}|~|e_t) = \left(1-\frac{1}{N}\right)|e_t| + \frac{1}{N} g(e_t) \sgn(e_t),
	\]
	where $g(u) := \lambda(c^*+u)-c^*$. 
	
	Define $\tau_0 = \text{min}\{t\geq 0: |e_t|\leq 1/N\}.$  Letting $\mathscr{A}_t := \sigma(e_0,\ldots,e_t)$, and noting that $g(0)=0$ and $\mathbbm{1}_{\tau_0 >t+1} \le \mathbbm{1}_{\tau_0>t}$, we have:
	$$\E (|e_{t+1}| \mathbbm{1}_{\tau_0 >t+1}~|\mathscr{A}_t) \le \left(1-\frac{1}{N}\right)|e_t|\mathbbm{1}_{\tau_0>t} + \frac{1}{N} g(e_t\mathbbm{1}_{\tau_0>t}) \sgn(e_t)~.$$
	Note that $e_t$ cannot change sign before time $\tau_0$, since otherwise, there would exist $s<\tau_0-1$ such that $|c_s-c_{s+1}| = |e_s-e_{s+1}| >2/N$, a contradiction. Hence, we have:
	\begin{equation}\label{sgfl}
		\E (|e_{t+1}| \mathbbm{1}_{\tau_0 >t+1}~|\mathscr{A}_t) \le \left(1-\frac{1}{N}\right)|e_t|\mathbbm{1}_{\tau_0>t} + \frac{1}{N} g(e_t\mathbbm{1}_{\tau_0>t}) \sgn(e_0).
	\end{equation}
	Defining $\theta_t := \E (|e_t|\mathbbm{1}_{\tau_0>t})$ and taking expectation on both sides of \eqref{sgfl}, we thus have:
	\begin{equation}\label{sgfl1}
		\theta_{t+1} \le \left(1-\frac{1}{N}\right) \theta_t + \frac{1}{N} \sgn(e_0) \E g(e_t\mathbbm{1}_{\tau_0>t}).
	\end{equation}
	Now, define the function $\bar{g}$ as the greatest convex minorant of $g$ on the interval $[-1-c^*,0]$ and the least concave majorant of $g$ on the interval $[0,1-c^*]$. Then, \eqref{sgfl1} gives us:
	$$\theta_{t+1} \le \left(1-\frac{1}{N}\right) \theta_t + \frac{1}{N} \sgn(e_0)\E \gb (e_t\mathbbm{1}_{\tau_0>t}), \quad \text{i.e.}\quad \theta_{t+1}-\theta_t \le -\frac{1}{N}\left(\theta_t - \sgn(e_0)\E \gb (e_t\mathbbm{1}_{\tau_0>t})\right)~.$$
	By Jensen's inequality, $\E \gb (e_t\mathbbm{1}_{\tau_0>t}) \le \gb(\theta_t)$ if $e_0>0$, and $\E \gb (e_t\mathbbm{1}_{\tau_0>t}) \ge \gb(-\theta_t)$ if $e_0<0$. Hence, 
	\[
	\theta_{t+1}-\theta_t \le \begin{cases} 
		-\frac{1}{N}(\theta_t - \gb(\theta_t)) & \text{if } e_0>0,\\
		-\frac{1}{N}(\theta_t + \gb(-\theta_t)) & \text{if } e_0<0.
	\end{cases}
	\]
	Since $0$ is the only fixed point of $g$ on the interval $[-1-c^*,1-c^*]$, the function $h(u) := g(u)-u$ cannot change sign on either side of $0$. By Lemma \ref{special_lemma}, $h'(0) = h''(0)=0$ and $h'''(0) < 0$, which implies the exitence of $\delta>0$, such that the following are true:
	\begin{enumerate}
		\item $h''<0$ on $(0,\delta)$ and $h''>0$ on $(-\delta,0)$;
		\item $h'<0$ on $(-\delta,\delta)\setminus \{0\}$;
		\item $h<0$ on $(0,\delta)$ and $h>0$ on $(-\delta,0)$.
	\end{enumerate}
	Point 3 above implies that $g(u)<u$ for all $u>0$ and $g(u)>u$ for all $u<0$, which is enough to conclude that $\gb(u) <u$ for all $u>0$ and $\gb(u)>u$ for all $u<0$. Now, suppose that $\theta_t >\mu$ for some constant $\mu>0$. Note that: 
	$$K_1(\mu) := \inf_{u \in [\mu,1-c^*]} (u-\gb(u)) > 0\quad\quad\text{and}\quad \quad K_2(\mu) := \inf_{u \in [-1-c^*,-\mu]} (\gb(u)-u) > 0.$$ 
	Taking $K(\mu) := \min \{K_1(\mu),K_2(\mu)\}$, we thus have:
	\begin{equation}\label{thetadrift7}
		\theta_{t+1}-\theta_t \le -\frac{K(\mu)}{N}~.
	\end{equation}
	It follows from \eqref{thetadrift7} that for every $\epsilon > 0$, there exists a time $T(\epsilon) = O_{\epsilon}(N)$ such that $\theta_t < \epsilon$ for all $t \ge T(\epsilon)$. Now, note that by point 1 above, $g$ is concave on a right neighborhood of $0$ and convex on a left neighborhood. Hence, there exists $\alpha>0$ such that $g=\gb$ on the interval $[-\alpha,\alpha]$, and $\lambda'''(u) <0$ for all $u\in [c^*-\alpha,c^*+\alpha]$. We wait till time $O(N)$ for $\theta_t$ to enter this interval, after which we have:
	$$\gb(\theta_t) = g(\theta_t) = \lambda^{'}(c^*)\theta_t + \frac{\lambda^{''}(c^*)}{2} \theta_t^2 + \frac{\lambda^{'''}(\xi_1)}{6}\theta_t^3\leq \theta_t - c_1 \theta_t^3$$
	and 
	$$\gb(-\theta_t) = g(-\theta_t) = - \lambda^{'}(c^*)\theta_t + \frac{\lambda^{''}(c^*)}{2}\theta_t^2 - \frac{\lambda^{'''}(\xi_2)}{6}\theta_t^3 \geq -\theta_t + c_2\theta_t^3 ,$$
	for some $\xi_1 \in [c^*, c^* + \theta_t]$ and $\xi_2 \in [c^* - \theta_t, c^*]$ and constants $c_1,c_2>0$. Hence, we have:
	\begin{equation}\label{prref}
		\theta_{t+1} \le \theta_t - \frac{c}{N}\theta_t^3
	\end{equation}
	for all $t\ge T(\alpha) = O_\alpha(N)$. Proceeding exactly similarly as Step 1 in the proof of Theorem 4.1 in \cite{levin2008}, we can now conclude that:
	$$\lim_{a\xrightarrow{}\infty}\P(\tau_0>aN^{3/2}) = 0$$
	uniformly in $N$. Further, following the similar approach as Step 2 in the same proof, we can conclude that $\tm = O(N^{3/2})$.
	
	For proving the lower bound, we will follow the approach in \cite{levin2008}. Towards this, define $\bar{e} := \bar{X}-c^*$ where $\bm X$ is simulated from the model \eqref{moddef}. It follows from Theorem 3.1 (3) in \cite{Mukherjee_2021} that under $\P_{\beta,h,p}$, $N^{1/4}\bar{e}$ converges weakly to a non-trivial distribution as $N\rightarrow \infty$. Fix a number $K \in (0,1)$ (to be chosen later), whence there exists $A>0$ (depending on $K$), such that for all large $N$,
	\begin{equation}\label{tv81}
		\P_{\beta,h,p}(|\bar{e}| \le A N^{-1/4}) \ge K~.
	\end{equation}
	Next, set $e_0 := 2AN^{-1/4}$ and define a chain $\tet$ with the same transition structure as $\et$, except at $e_0$, where the $\tet$ chain stays with probability equal to that of the $\et$ chain either going up or staying at $e_0$. The chains $\et$ and $\tet$ can be coupled in such a way, that $\et$ dominates $\tet$ stochastically under $\P_{e_0}$. Next, define:
	$$\tau := \min \{t\ge 0: \tet \le AN^{-1/4}\}$$ and let $d_t := e_0 - \tilde{e}_{t\wedge \tau}$.  Now, note that for $x\in (AN^{-1/4},e_0)$, we have:
	$$\E_{e_0}(\tett |\tet = x) = \E_{e_0}(\ett |\et = x) =\frac{1}{N}\left(\lambda(c^*+x)-c^*-x\right)+x~.$$
	A Taylor series expansion of the last term of the above identity, together with Lemma \ref{special_lemma} gives:
	\begin{equation*}
		\E_{e_0}(\tett |\tet = x) = x +\frac{x^3}{6N}\lambda'''(\xi) \ge  x - \frac{\alpha}{N} x^3
	\end{equation*}
	for some constant $\alpha >0$, where $\xi \in (c^*, c^*+x)$. Defining $\mathscr{A}_t:= \sigma(d_1,\ldots,d_t)$, we can now proceed exactly as in the proof of Theorem 4.2 in \cite{levin2008} to conclude that:
	\begin{equation}\label{lcbic}
		\E_{e_0}(d_{t+1}^2-d_t^2 |\mathscr{A}_t)  \le \frac{C_A}{N^2}
	\end{equation}
	for some constant $C_A>0$ depending on $A$. Taking expectation on both sides of \eqref{lcbic} and iterating, we have:
	$$\E_{e_0} d_t^2 \le \frac{C_A t}{N^2}~.$$
	Once again, following the steps in \cite{levin2008}, we have:
	$$\P_{e_0}(\tau \le t) \le \frac{C_A t}{A^2N^{3/2}}~.$$ Fixing another number $L\in (0,K)$ and setting $t := (A^2L/C_A) N^{3/2}$ above, we get:
	\begin{equation}\label{tv8278}
		\P_{e_0}(|e_t| \le A N^{-1/4}) \le \P_{e_0}(e_t \le A N^{-1/4}) \le \P_{e_0}(\tet \le A N^{-1/4}) \le \P_{e_0}(\tau \le t) \le L~.   
	\end{equation}
	Combining \eqref{tv81} and \eqref{tv8278}, we have: 
	$$\max_{\bm x\in \mathcal{C}_N} d_{TV}\left(\P_{\bm x}(\bm X^t \in \cdot), \P_{\beta,h,p}\right) \ge \P_{\beta,h,p}(|\bar{e}| \le A N^{-1/4}) - \P_{e_0}(|e_t| \le A N^{-1/4}) \ge K-L$$
	with $t = C_{K,L} N^{3/2}$ for some constant $C_{K,L}>0$.
	We can choose $K$ and $L$ such that $0<L<K<1$ and $K-L>\epsilon$ to conclude that $\tm(\epsilon) \ge C_{K,L} N^{3/2}$, which completes the proof of the lower bound.
	
	\section{Proof of Theorem \ref{Theorem 1mt}}\label{proofmrt}
	Throughout this section, let $S_t^+$ denote the sum of the $t^{\mathrm{th}}$ step of the restricted Glauber dynamics.   The upper bound result for the mixing time of the restricted dynamics is based on the following lemma, which relies on methods outlined in \cite{levin2008}.

	\begin{lemma} \label{prop:hit_from_zero}
		For every $c > 0$, define
	$\tau_\star = \tau_\star(c) := \min\{t \geq 0 : S^+_t \geq N m_+ + c N^{1/2} \}.$
Then,
		\begin{equation*} 
			\mathbb{E}_{\lceil Nm_-\rceil}[\tau_\star] = O(N \log N)
		\end{equation*}
	where the subscript $\lceil Nm_-\rceil$ denotes starting the restricted dynamics from $\lceil Nm_-\rceil$.
	\end{lemma}
	
	The proof of Lemma \eqref{prop:hit_from_zero} is similar to that of Proposition 5.2 in \cite{levin2008}, but for completeness, we work out the exact details in Appendix \ref{proofprophit}. Once again, following \cite{levin2008}, we prove the following lemma:
	
	\begin{lemma}\label{lem:alphw2}
		For every $\alpha>0$, define $\tau^* = \tau^*(\alpha) := \min\{t \geq 0 : S^+_t \leq N m_+ + \alpha N^{1/2} \}.$ Then, there exists a constant $c>0$ such that
		\begin{equation*}
			\lim_{N\rightarrow \infty} \P(\tau^*>cN\log N)=0.    
		\end{equation*}
	\end{lemma}
	
	\begin{proof}
		Suppose that the chain $S_t^+$ starts from a state larger than $Nm_+ +\alpha N^{1/2}$. It follows from the arguments in Lemmas \ref{thm1} and \ref{corollary_reg} that for any $\varepsilon>0$, with probability at least $1-e^{-\Omega(\sqrt{N})}$, the chain $s_t^+ := S_t^+/N$ enters and remains below $m_+ + \varepsilon$ for a duration of $e^{\Omega(\sqrt{N})}$ from time $O(N)$. Define $e_t := s^+_t - m_+$, and suppose that $\varepsilon > 0$ be such that $\delta := \sup_{x\in [m_+-\varepsilon, m_+ + \varepsilon]}\lambda^{'}(x)<1$. Repeating the proof of Section \ref{sec:regsy}, we once again have:
			\begin{align*}
			\E(e_{t} \mathbbm{1}_{\{\tau_0>t\}})  \leq \left(1 - \frac{1-\delta}{N}\right)^t\leq e^{- \frac{t}{N}\left(1- \delta\right)}~,
		\end{align*}
	where $\tau_0 := \min\{t\geq 0: e_t\leq \alpha N^{-1/2}\} = \tau^*$.
		Putting $t = cN \log N$, we thus get:
		\begin{align*}
			\P(\tau_0 > t) \leq \P\left(e_t \mathbbm{1}_{\{\tau_0>t\}}> \frac{\alpha}{\sqrt{N}}\right) \leq \alpha^{-1}\sqrt{N}\E\left(e_t \mathbbm{1}_{\{\tau_0>t\}}\right)  \leq \frac{N^{-c\left(1- \delta\right)+\frac{1}{2}}}{\alpha}
		\end{align*}
		Since $1-\delta >0$, we have $\lim_{c\rightarrow \infty}\P(\tau_0 > cN\log N) = 0$, uniformly in $N.$
	\end{proof}
	It now follows from Lemmas \ref{prop:hit_from_zero} and \ref{lem:alphw2}, and the proof of Theorem 5.3 in \cite{levin2008}, that:
	\begin{equation}\label{ubwl7}
	\tau_{\textbf{mix}} = O(N\log N).
\end{equation}
	
	On the other hand, the following central limit theorem for the mean magnetization under the stationary measure $\mu_+ := \P_{\beta, h, p}(\cdot\Big| \bar{X} \ge m_-)$ (see \cite{Mukherjee_2021}):
	$$\bar{X}-m_+ = O_{\mu_+} (N^{-1/2})$$
	together with the exact arguments presented in the proof of Theorem 5.4 in \cite{levin2008}, also establishes a matching lower bound $\Omega(N\log N)$ to $\tau_{\mathbf{mix}}$. This, coupled with \eqref{ubwl7}, completes the proof of Theorem \ref{Theorem 1mt}.
	\section{Discussion}
	In this paper, we determined the mixing time order of the Glauber dynamics for the $p$-spin Curie-Weiss model on different regions of almost the entire parameter space $\{(\beta,h):\beta > 0, h \in \mathbb{R}\}$. The asymptotics of the magnetization in the $p$-spin Curie-Weiss model is largely governed by the global maximizers of a function $H_{\beta,h,p}$, while the mixing time of the corresponding Glauber dynamics depends on the number and nature of the local maximizers of this function. We showed that the mixing time is $\Theta(N\log N)$ whenever $H_{\beta,h,p}$ has a unique stationary point, which is a local maximizer with negative curvature, it is $\exp(\Omega(N))$ when there are multiple local maximizers, and it is $\Theta(N^{3/2})$ when there is a unique local maximizer with zero curvature. 
	
	The above three regions almost exhaust the entire parameter space, with just a one-dimensional curve left out that separates the region of multiple local maximizers from the region of unique local maximizer with negative curvature. Everywhere on this curve, there exists a stationary inflection point in addition to a local maximizer, and the mixing time of the Glauber dynamics is likely governed by the behavior of the mean-magnetization chain near this point of inflection. This is left as an open question. Another interesting direction for future research might be to analyze the Glauber dynamics mixing time for the $p$-spin Curie-Weiss Potts model \citep{bhowal2023limittheoremsphasetransitions, bhowal2024ratesconvergencemagnetizationtensor}. This problem has been solved for the $2$-spin case in \cite{Cuff2012GlauberDF} (also see \cite{he1}). On a different note, it was shown in \cite{levin2008}, that for $\beta < \frac{1}{2}$, the Glauber dynamics for the $2$-spin Curie-Weiss model with no external field exhibits a cut-off at time $[2(1-2\beta)]^{-1} N\log N$ with window size $N$, i.e. the total variation distance of the dynamics from the stationary distribution drops from near $1$ to near $0$ in a window of order $N$ around $[2(1-2\beta)]^{-1} N\log N$. It would be interesting to investigate a similar cut-off phenomenon on the set $\mathfrak{R}_p$ for the Glauber dynamics on the $p$-spin Curie-Weiss model. Finally, as already mentioned before, \cite{Ding2014} gave a thorough description of the mixing time transition of the Curie-Weiss Glauber dynamics for the case $p=2, h=0$, from $\Theta(N\log N)$ for $\beta <\frac{1}{2}$ through $\Theta (N^{3/2})$ at $\beta=\frac{1}{2}$ to $\exp(\Theta(N))$ for $\beta > \frac{1}{2}$. The relevant question here, is that can one derive an analogous description of the transition of the mixing time by zooming into the special point(s) and possibly along some suitable curve passing from $\mathfrak{R}_p$ to $\mathfrak{C}_p$ through these special points?
	
	We came to know of the recent paper \citep{mikulincer2024stochasticlocalizationnongaussiantilts} that was arxived around the same time this manuscript was finalized, where the authors give some mixing time estimates for the Glauber dynamics on tensor Curie-Weiss models with no external field, as an application of a more general framework extending Eldan's stochastic localization process to include non-Gaussian tilts. In particular, our mixing threshold $\beta_p'$ (see Appendix \ref{appendix_geo}) appears in their Proposition 5.1 also, where they show exponentially slow mixing for the Glauber dynamics on the tensor Curie-Weiss model with no external field, above this threshold. However, to the best of our understanding, explicit results on the mixing time of Glauber dynamics on the $p$-spin Curie-Weiss model for the cases $h\ne 0$ and $h=0, \beta \le \beta_p'$ are not stated in this paper.

	\section{Acknowledgement}
	Somabha Mukherjee was supported by the National University of Singapore
	Start-Up Grant R-155-000-233-133 and the AcRF Tier 1 grant A-8001449-00-00. Ramkrishna Jyoti Samanta acknowledges support from the Additional Funding Programme for Mathematical Sciences, delivered by
	EPSRC (EP/V521917/1) and the Heilbronn Institute for Mathematical Research. R.J.S. would also like to thank the Isaac Newton Institute for Mathematical Sciences, Cambridge, for support during the programme Stochastic systems for anomalous diffusion, where this work was undertaken. He was supported by EPSRC (EP/R014604/1) through finance division of the University of Cambridge. 
	
	S.M. is grateful to Bhaswar Bhattacharya for several useful discussions throughout the preparation of this manuscript, and would like to thank Nathan Ross for pointing out related literature on fast mixing of the restricted Glauber dynamics in the Curie-Weiss Potts model in presence of metastable states.

		\bibliography{references}
		
		\appendix
		
		\section{Geometry of the Mixing Phases}\label{appendix_geo}
		
		In this section, we provide a detailed description of the geometry of the three regions $\mathfrak{R}_p, \mathfrak{C}_p$ and $\mathfrak{S}_p$. To begin with, note that in view of Lemma F.2 in \cite{Mukherjee_2021}, 
		(and as already mentioned in the proof of Lemma \ref{special_lemma}), we know that for odd $p$, $\mathfrak{S}_p = \{(\hat{\beta}_p, \hat{h}_p)\}$ and for even $p$, $\mathfrak{S}_p = \{(\hat{\beta}_p, \hat{h}_p), (\hat{\beta}_p, -\hat{h}_p)\}$, where
		$$\hat{\beta}_p = \frac{1}{2(p-1)}\left(\frac{p}{p-2}\right)^{\frac{p-2}{2}},
		\quad \hat{h}_p = \tanh^{-1}\left(\sqrt{\frac{p-2}{p}}\right) - \hat{\beta}_p p \left(\frac{p-2}{p}\right)^{\frac{p-1}{2}}.$$ Also, define the following two threshold parameters, which are important quantities in describing the phase geometry:
		$$\tilde{\beta}_p := \sup \left\{\beta>0: \sup_{x\in [0,1]} H_{\beta,0,p}(x) =0\right\} =\inf_{x\in [0,1]} \frac{I(x)}{x^p}~,$$ $$\beta_p' := \sup \left\{\beta>0: \sup_{x\in [0,1]} H_{\beta,0,p}'(x) =0\right\} = \inf_{x\in [0,1]} \frac{\tanh^{-1}(x)}{px^{p-1}}.$$
		
		We now proceed to describe the set $\mathfrak{C}_p$.
		
		\begin{lemma}\label{pcritdescr}
			The geometry of $\mathfrak{C}_p$ depends on the parity of $p$. In particular, we have:
			\begin{enumerate}
				\item[(1)] If $p$ is odd, then there exist strictly decreasing, continuous functions $U,C,L : (\hat{\beta}_p,\infty) \to \mathbb{R}$, such that:
				\begin{enumerate}
					\item $C(\tbp)=0$, $C(\beta) \rightarrow -\infty$ as $\beta \rightarrow \infty$, and the set of all points $(\beta,h)$ such that $H_{\beta,h}$ has more than one global maximizer, is given by $\{(\beta,C(\beta)): \beta > \hat{\beta}_p\}$;
					\item $L(\beta)< C(\beta)<U(\beta)$ for all $\beta > \hat{\beta}_p$;
					\item $\mathfrak{C}_p = \{(\beta,h): \beta >\hat{\beta}_p, ~h\in (L(\beta),U(\beta))\}$.
					\item $U(\beta) >0$ for all $\beta >\hat{\beta}_p$ and $\lim_{\beta\rightarrow\infty} U(\beta)=0$;
					\item As $\beta \rightarrow \hat{\beta}_p$ from the right, all three functions $U(\beta), C(\beta)$ and $L(\beta)$ converge to $\hat{h}_p$.        
				\end{enumerate}
				\item[(2)] If $p$ is even, then $\beta_p' \in (\hat{\beta}_p,\tbp)$, and there exist continuous functions $U,C: (\hat{\beta}_p,\infty) \to \mathbb{R}$ and a strictly decreasing, continuous function $L: (\hat{\beta}_p,\beta_p']\to \mathbb{R}$, such that:
				\begin{enumerate}
					\item $C$ is strictly decreasing on $(\hat{\beta}_p,\tilde{\beta}_p)$, vanishing on $[\tbp,\infty)$, and the set of all points $(\beta,h)$ such that $H_{\beta,h}$ has more than one global maximizer, is given by $\{(\beta,\pm C(\beta)): \beta > \hat{\beta}_p\}$;
					\item $0<L(\beta)< U(\beta)$ for all $\beta \in (\hat{\beta}_p,\beta_p')$, $L(\beta_p')=0$, and the graphs of $\pm C$ lie in the interior of the region bounded by the graphs of $\pm U$ and $\pm L$. 
					
					\item The set $\mathfrak{C}_p$ is given by:
					\begin{eqnarray*}
						\mathfrak{C}_p &:=& \{(\beta,h): \beta >\beta_p', ~h\in (-U(\beta),U(\beta))\}\\ &\bigcup& \{(\beta,\pm h): \beta \in (\hat{\beta}_p,\beta_p'], ~h\in (L(\beta),U(\beta))\}
					\end{eqnarray*}
					\item There exists $\beta_0>\beta_p'$, such that $U$ is strictly decreasing on $(\hat{\beta}_p,\beta_0)$ and strictly increasing above $\beta_0$. Further, $\lim_{\beta \rightarrow \infty} U(\beta) =\infty$.
					\item As $\beta \rightarrow \hat{\beta}_p$ from the right, all three functions $U(\beta), C(\beta)$ and $L(\beta)$ converge to $\hat{h}_p$.        
				\end{enumerate}
			\end{enumerate}
		\end{lemma}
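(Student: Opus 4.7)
The plan is to translate the whole geometric description into properties of the single function $g_\beta(m) := \tanh^{-1}(m) - p\beta m^{p-1}$ on $(-1,1)$. Since the stationarity equation $H_{\beta,h,p}'(m)=0$ reads $g_\beta(m)=h$ and $H_{\beta,h,p}''(m) = -g_\beta'(m)$, a stationary point $m$ is a local maximizer of $H_{\beta,h,p}$ precisely when $g_\beta'(m)\ge 0$. Hence $(\beta,h)\in \mathfrak{C}_p$ iff the horizontal line $y=h$ meets the graph of $g_\beta$ at two or more points of weakly increasing slope, and all claims in the lemma reduce to a careful shape analysis of $g_\beta$ and its ``wiggle''.

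I would first compute $g_\beta'(m) = (1-m^2)^{-1} - p(p-1)\beta m^{p-2}$. For odd $p$, $m^{p-2}$ is odd and nonpositive on $(-1,0]$, so $g_\beta'>0$ there for every $\beta$, forcing any wiggle to live on $(0,1)$. For even $p$, $g_\beta$ is odd, so wiggles come in a symmetric pair. The threshold $\beta=\hat\beta_p$ is the smallest $\beta$ at which $g_\beta$ develops a stationary point, characterized by the simultaneous vanishing $g_\beta'(m)=g_\beta''(m)=0$; solving yields $m_*=\sqrt{(p-2)/p}$ and the stated formula for $\hat\beta_p$. For $\beta>\hat\beta_p$, $g_\beta'(m)=0$ has two transversal positive roots $m_-(\beta)<m_+(\beta)$, smooth in $\beta$ by the implicit function theorem, with $m_\pm(\hat\beta_p^+)=m_*$. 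Setting $U^g(\beta):=g_\beta(m_-(\beta))$ (local max of the positive wiggle) and $L^g(\beta):=g_\beta(m_+(\beta))$ (local min), the envelope identity $\frac{d}{d\beta}U^g(\beta)=-p\,m_-^{p-1}$ (and analogously for $L^g$) shows both are strictly decreasing, while solving $p(p-1)\beta\,m^{p-2}(1-m^2)=1$ asymptotically gives $m_-(\beta)\to 0$ and $m_+(\beta)\to 1$, hence $U^g(\beta)\to 0$ and $L^g(\beta)\to -\infty$ as $\beta\to\infty$.

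For odd $p$, the unique wiggle yields $\mathfrak{C}_p=\{(\beta,h): \beta>\hat\beta_p,\ h\in (L^g(\beta),U^g(\beta))\}$, identifying $L:=L^g$, $U:=U^g$ and giving all monotonicity and limiting claims in (1a)--(1e). For even $p$, oddness of $g_\beta$ produces a mirror wiggle, contributing multiple local maxima whenever $h\in(-U^g(\beta),-L^g(\beta))$; these two bands are disjoint when $L^g(\beta)>0$, touch at $\beta=\beta_p'$ (exactly where $L^g=0$, matching the definition $\beta_p'=\inf_m \tanh^{-1}(m)/(pm^{p-1})$ since $L^g(\beta)=0\Leftrightarrow \beta=\tanh^{-1}(m_+(\beta))/(pm_+(\beta)^{p-1})$), and merge into the symmetric interval $(-\max(U^g,-L^g),\max(U^g,-L^g))$ for $\beta>\beta_p'$. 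Since $U^g\downarrow 0$ and $-L^g\uparrow\infty$ monotonically, they cross at a unique $\beta_0>\beta_p'$, after which the outer boundary is governed by $-L^g$; this gives the V-shape of $U$ in (2d) and the divergence $U(\beta)\to\infty$. To define $C(\beta)$ I would invoke the Maxwell construction: $C(\beta)$ is the unique $h$ for which the two local maxima of $H_{\beta,h,p}$ take equal values, equivalently the equal-area condition $\int_{m_1}^{m_2}(h-g_\beta(t))\,dt=0$; smoothness of $C$ follows from IFT, $C(\tbp)=0$ (odd $p$) matches the definition of $\tbp$ as the threshold for equal-height global maxima at $h=0$, and $C(\beta)\to-\infty$ comes from carrying the asymptotics of the previous step through the Maxwell condition.

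The main technical obstacle is the degenerate behavior near $\beta=\hat\beta_p$, where $m_-(\beta)$ and $m_+(\beta)$ coalesce at $m_*$ and the standard IFT fails. This requires a Puiseux expansion $m_\pm(\beta)-m_* = \pm c\sqrt{\beta-\hat\beta_p}+O(\beta-\hat\beta_p)$, from which the common limit $U(\beta),L(\beta),C(\beta)\to \hat h_p$ can be read off. A secondary point requiring care is the \emph{uniqueness} of the crossing $\beta_0$ in the even case, which I would handle by showing that $U^g+L^g$ is strictly monotone, reducing the question to a single sign change.
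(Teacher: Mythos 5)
Your proposal is essentially the paper's argument in a thin disguise: $g_\beta = -H_{\beta,0,p}'$, your $m_{\pm}(\beta)$ are the paper's roots $a_1(\beta),a_2(\beta)$ of $H_{\beta,0,p}''$, your $U^g,L^g$ agree with the paper's $U,L$, your envelope identity $\frac{d}{d\beta}g_\beta(m_{\pm})=-pm_{\pm}^{p-1}$ is the paper's differentiation of $V_f$, and your even-$p$ treatment (two mirrored wiggles, merger at $\beta_p'$, unique crossing $\beta_0$) parallels the paper's appeal to Lemma \ref{derlv}. Two caveats are worth flagging. Your opening criterion --- $(\beta,h)\in\mathfrak{C}_p$ iff $y=h$ meets $g_\beta$ at two or more points of \emph{weakly} increasing slope --- is too generous: a stationary point with $g_\beta'(m)=0$ is an inflection of $H_{\beta,h,p}$, not a local maximizer, which is exactly why the boundary levels $h=U(\beta)$ and $h=L(\beta)$ are excluded from $\mathfrak{C}_p$; your subsequent work with open intervals quietly fixes this, but the criterion as stated would misclassify $\mathfrak{B}_p$. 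Second, the paper does not construct $C$ (it cites Lemma F.3 of Mukherjee et al.), whereas your Maxwell equal-area definition is a genuine self-contained alternative; however $C(\beta)\to-\infty$ for odd $p$ does not follow from $L<C<U$, since $U\to 0$ only bounds $C$ from above, and strict monotonicity of $C$ still needs the extra envelope step $C'(\beta)=-(m_2^p-m_1^p)/(m_2-m_1)<0$. Finally, the Puiseux expansion at $\hat\beta_p$ is more machinery than required: continuity of $a_1,a_2$ as $\beta\downarrow\hat\beta_p$ already gives $U,L\to\hat h_p$, and $C\to\hat h_p$ then follows by squeezing.
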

		
		\begin{proof}[Proof of Part (1)]
			The existence of a strictly decreasing and continuous function $C : (\hat{\beta}_p,\infty) \to \mathbb{R}$ satisfying (a) follows immediately from Lemma F.3 in \cite{Mukherjee_2021}. Further, it follows from the proof of Lemma F.3 in \cite{Mukherjee_2021}, that if $\beta < \hat{\beta}_p$, then $H'' < 0$ on $[-1,1]$, i.e. $H$ is strictly concave on $[-1,1]$, and hence cannot have more than one local maximizer. Moreover, if $\beta = \hat{\beta}_p$, then by Lemma F.2 (1) in \cite{Mukherjee_2021}, $H''$ has a unique root in $[-1,1]$. This, together with the fact that  $H''(1)=H''(-1) = -\infty$, implies that $H''<0$ everywhere else on $[-1,1]$. Hence, once again $H$ is strictly concave, so cannot have more than one local maximizer. So, any point $(\beta,h) \in \mathfrak{C}_p$ must satisfy $\beta > \hat{\beta}_p$. 
			
			Next, it follows from the proof of Lemma F.3 (2) in \cite{Mukherjee_2021} that there exist $0< a_1 := a_1(\beta)< a_2 := a_2(\beta)<1$ such that $H_{\beta,0,p}'' < 0$ on $[-1,a_1)\bigcup (a_2,1]$, $H_{\beta,0,p}'' = 0$ on $\{a_1,a_2\}$, and $H_{\beta,0,p}'' > 0$ on $(a_1,a_2)$. This implies that for all $h$, $H_{\beta,h,p}'$ is strictly decreasing on $[-1,a_1)$, strictly increasing on $(a_1,a_2)$ and strictly decreasing on $(a_2,1)$. Define: 
			$$U(\beta) := \tanh^{-1}(a_1) -\beta p a_1^{p-1}\quad\quad \text{and} \quad \quad L(\beta) := \tanh^{-1}(a_2)-\beta p a_2^{p-1}~.$$ Note that $H_{\beta,h,p}'(1) = -\infty$ and $H_{\beta,h,p}'(-1) = \infty$. If $h>U(\beta)$, then $H_{\beta,h,p}'(a_1) > 0$, and hence, $H_{\beta,h,p}'$ must have exactly one root (in $(a_2,1)$), which is a local maximizer of $H_{\beta,h,p}$. Also, if $h<L(\beta)$, then $H_{\beta,h,p}'(a_2)< 0$, which implies that $H_{\beta,h,p}'$ must have exactly one root (in $(-1,a_1)$), that is a local maximizer of $H_{\beta,h,p}$. Hence, for $h>U(\beta)$ or $h<L(\beta)$, the point $(\beta,h)\notin \mathfrak{C}_p$. Moreover, if $h=U(\beta)$, then $a_1$ is a stationary inflection point of $H_{\beta,h,p}$, with exactly one more stationary point to the right of $a_2$ which is still a local maximum, and if $h=L(\beta)$, then $a_2$ is a stationary inflection point of $H_{\beta,h,p}$, with exactly one more stationary point to the left of $a_1$ which is still a local maximum. Hence, the points $(\beta,U(\beta))$ and $(\beta,L(\beta))$ are not in $\mathfrak{C}_p$ either. 
			
			Next, since $H_{\beta,0,p}'$ is strictly increasing on $(a_1,a_2)$, it follows that $U(\beta) > L(\beta)$. Further, we have precisely shown that a necessary condition for $(\beta,h)$ to be in $\mathfrak{C}_p$, is that $L(\beta) < h<U(\beta)$. This proves (b). Now, suppose that $\beta >\hat{\beta}_p$ and $L(\beta) < h<U(\beta)$. Then, $H_{\beta,h,p}'(a_1) <0$ and $H_{\beta,h,p}'(a_2) > 0$, which implies the existence of two roots of $H_{\beta,h,p}'$ in $(-1,a_1)$ and $(a_2,1)$, both of which are local maximizers of $H_{\beta,h,p}$.  This completes the proof of (c).
			
			Since $a_1(\beta)$ and $a_2(\beta)$ are roots of $H_{\beta,0,p}''$, they are continuous functions themselves. Hence, both $U$ and $L$ are continuous. Further, for a differentiable function $f: (\hat{\beta}_p,\infty) \to (0,1)$, if we define $V_f(\beta) := \beta p f(\beta)^{p-1} - \tanh^{-1}(f(\beta))$, then we have:
			$$V_f'(\beta) = pf(\beta)^{p-1} + f'(\beta)H_{\beta,0,p}''(f(\beta)).$$
			Since $U = -V_{a_1}$ and $L = - V_{a_2}$, we have
			$U'(\beta) = -pa_1(\beta)^{p-1} <0 $ and $L'(\beta) = -pa_2(\beta)^{p-1} < 0$. Hence, both $U$ and $L$ are strictly decreasing.
			
			Now, note that $H_{\beta,0,p}'(a_1) <0$, since $H_{\beta,0,p}'(0) = 0$ and $H_{\beta,0,p}'$ is strictly decreasing on $[0,a_1)$. So, $U(\beta) = -H_{\beta,0,p}'(a_1) > 0$. Next, suppose that $\beta$ is large enough, so that:
			$$y := \left(\frac{5}{3\beta p(p-1)}\right)^{\frac{1}{p-2}} <\frac{1}{2}~.$$ Then, we have:
			$$H_{\beta,0,p}''(y) = \frac{5}{3} - \frac{1}{1-y^2} > 0$$ and hence, $a_1(\beta)<y$. So, $a_1(\beta) \rightarrow 0$ as $\beta \rightarrow \infty$, and hence, $\tanh^{-1}(a_1(\beta)) \rightarrow 0$. Further,
			$$\beta p a_1(\beta)^{p-1} = O_p\left(\beta^{1-\frac{p-1}{p-2}}\right) = O_p\left(\beta^{-\frac{1}{p-2}}\right)~.$$ This proves (d). Finally, note that as $\beta \rightarrow \hat{\beta}_p^+$, both the roots $a_1(\beta)$ and $a_2(\beta)$ of $H_{\beta,0,p}''$ also converge to the only root $\sqrt{1-\frac{2}{p}}$ of $H_{\hat{\beta}_p,0,p}''$. Hence, both $U(\beta)$ and $L(\beta)$ converge to $\hat{h}_p$. This completes the proof of part (1) of Lemma \ref{pcritdescr}. 
		\end{proof}
		\begin{proof}[Proof of Part (2)]
			The existence of a continuous function $C : (\hat{\beta}_p,\infty) \to \mathbb{R}$ satisfying (a) follows immediately from Lemma F.3 in \cite{Mukherjee_2021}. Also, once, again, $H$ is strictly concave for $\beta <\hat{\beta}_p$. For $\beta = \hat{\beta}_p$, $\pm \sqrt{1-\frac{2}{p}}$ are the only roots of $H''$, which coupled with the facts that $H''(\pm 1) = -\infty$ and $H''(0) = -1$, implies that $H''<0$ everywhere else on $[-1,1]$, i.e. $H$ is once again strictly concave. So, $H$ cannot have multiple local maximizers for $\beta \le \hat{\beta}_p$, i.e. any point $(\beta,h) \in \mathfrak{C}_p$ must once again satisfy $\beta > \hat{\beta}_p$. 
			
			Once again, it follows from the proof of Lemma F.3 (1) in \cite{Mukherjee_2021} that there exist $0< a_1 := a_1(\beta)< a_2 := a_2(\beta)<1$ such that $H_{\beta,0,p}'' < 0$ on $[0,a_1)\bigcup (a_2,1]$, $H_{\beta,0,p}'' = 0$ on $\{a_1,a_2\}$, and $H_{\beta,0,p}'' > 0$ on $(a_1,a_2)$. Since $H''$ is an even function, it follows that $H_{\beta,0,p}'' <0$ on $[-1,-a_2)\bigcup (-a_1,0]$, $H_{\beta,0,p}'' = 0$ on $\{-a_1,-a_2\}$, and $H_{\beta,0,p}'' > 0$ on $(-a_2,-a_1)$. This implies that for all $h$, $H_{\beta,h,p}'$ is strictly decreasing on $[-1,-a_2)$, strictly increasing on $(-a_2,-a_1)$, strictly decreasing on $(-a_1,a_1)$, strictly increasing on $(a_1,a_2)$ and finally, strictly decreasing on $(a_2,1]$. Define:
			$$U(\beta) := -\min\{H_{\beta,0,p}'(-a_2), H_{\beta,0,p}'(a_1)\}\quad \quad \text{and}\quad\quad U^-(\beta) := -\max\{H_{\beta,0,p}'(-a_1), H_{\beta,0,p}'(a_2)\}.$$
			Since $H_{\beta,0,p}'$ is an odd function, we have $U^-(\beta) =-U(\beta)$. If $h> U(\beta)$, then both $H_{\beta,h,p}'(a_1)$ and $H_{\beta,h,p}'(-a_2) > 0$, and hence, $H_{\beta,h,p}'$ must have exactly one root (in $(a_2,1)$), which is a local maximizer of $H_{\beta,h,p}$. Similarly, if $h<U^-(\beta)$, then both $H_{\beta,h,p}'(-a_1)$ and $H_{\beta,h,p}'(a_2) < 0$, which implies that $H_{\beta,h,p}'$ must have exactly one root (in $(-1,-a_2)$), which is a local maximizer of $H_{\beta,h,p}$. Hence, for $h>U(\beta)$ or $h<U^-(\beta)$, the point $(\beta,h)\notin \mathfrak{C}_p$. Moreover, if $h=U(\beta)$, then either $a_1$ or $-a_2$ is a stationary inflection point of $H_{\beta,h,p}$, with exactly one more stationary point to the right of $a_2$ which is still a local maximum, and if $h=U^-(\beta)$, then either $a_2$ or $-a_1$ is a stationary inflection point of $H_{\beta,h,p}$, with exactly one more stationary point to the left of $-a_2$ which is still a local maximum. Hence, the points $(\beta,U(\beta))$ and $(\beta,U^-(\beta))$ are not in $\mathfrak{C}_p$ either. Thus, a necessary condition for $(\beta,h)$ to be in $\mathfrak{C}_p$, is that $-U(\beta) < h<U(\beta)$, which also shows that $C(\beta) < U(\beta)$. Also, continuity of $U$ follows from continuity of $a_1$ and $a_2$.
			
			Next, it follows from the proof of part (1), that the map $\beta \mapsto H_{\beta,0,p}'(a_2(\beta))$ is strictly increasing. It also follows from the monotonicity pattern of $H_{\beta,0,p}'$, that:
			$$\sup_{x\in [0,1]} H_{\beta,0,p}'(x) = \max\{0, H_{\beta,0,p}'(a_2)\}~.$$
			Hence, $H_{\beta,0,p}'(a_2) \le 0$ for $\beta \le \beta_p'$ and  $H_{\beta,0,p}'(a_2) > 0$ for $\beta > \beta_p'$. It is also easy to see that $\beta_p' \in (\hat{\beta}_p,\tbp)$, since $H_{\hat{\beta}_p,0,p}'$ is strictly negative on the positive side, and $H_{\tbp,0,p}'$ is positive somewhere on the positive side (because $H_{\tbp,0,p}<0$ on a right neighborhood of $0$, and must be $0$ somewhere on the positive side).
			
			At this point, we observe that for even $p$, $x$ is a local maximizer of $H_{\beta,h,p}$ if and only if $-x$ is a local maximizer of $H_{\beta,-h,p}$, which implies that the phase diagram is symmetric about the $h=0$ axis. Hence, it suffices to focus on the $h\ge 0$ half of the plane. In view of Lemma \ref{derlv} (2), we can now conclude that for $\beta > \beta_p'$, the condition $-U(\beta)<h<U(\beta)$ is also sufficient for $(\beta,h)$ to be in $\mathfrak{C}_p$. For $\beta \in (\hat{\beta}_p,\beta_p']$, if we define $L(\beta) :=-H_{\beta,0,p}'(a_2)$, then $L$ is continuous on its domain (since $a_2$ is continuous). Also, note that in this case, $U(\beta) = - H_{\beta,0,p}'(a_1) \ge L(\beta)$, since $H_{\beta,0,p}'$ is increasing on $(a_1,a_2)$. It now follows from Lemma \ref{derlv} (1), that a necessary and sufficient condition for $(\beta,h)$ to belong to $\mathfrak{C}_p$, is that $h \in \pm (L(\beta),U(\beta))$. This proves (c). 
			
			Next, since $H_{\beta,0,p}'(a_2) \le 0$ for $\beta \le \beta_p'$ and  $H_{\beta,0,p}'(a_2) > 0$ for $\beta > \beta_p'$, by continuity, we must have $H_{\beta_p',0,p}'(a_2(\beta_p')) = 0$ i.e. $L(\beta_p') = 0$. Since $L$ is strictly decreasing on its domain, the proof of (b) is now complete.
			
			To prove (d), note that if $H_{\beta,0,p}'(-a_2)\le H_{\beta,0,p}'(a_1)$, then $U(\beta) = H_{\beta,0,p}'(a_2)$, and otherwise, $U(\beta)= -H_{\beta,0,p}'(a_1)$. In the first case, $U'(\beta)>0$ and in the second case, $U'(\beta)<0$. Define $g: (\hat{\beta}_p ,\infty)\to \mathbb{R}$ as: $$g(\beta) := H_{\beta,0,p}'(a_1(\beta)) - H_{\beta,0,p}'(-a_2(\beta)).$$
			Note that $g$ is a strictly increasing and continuous function, and $g(\beta_p') = H_{\beta_p',0,p}'(a_1(\beta_p')) < 0$. Next, since $a_1 < w_p := \sqrt{1-\frac{2}{p}}$, we have $H_{\beta,0,p}'(a_1(\beta)) \ge - \tanh^{-1}(w_p)$. On the other hand, we can choose $\beta >0$ large enough, such that $H_{\beta,0,p}'(\frac{1}{2}) > \tanh^{-1}(w_p) > 0$, which would then imply that $H_{\beta,0,p}'(a_2(\beta)) > \tanh^{-1}(w_p)$. So, $g(\beta) > 0$ for all large $\beta$. Hence, there is a unique root $\beta_0$ of $g$, larger than $\beta_p'$, and for $\beta \in (\hat{\beta}_p,\beta_0)$, $U$ is strictly decreasing, while for $\beta \ge \beta_0$, $U$ is strictly increasing. Moreover, since $H_{\beta,0,p}'(\frac{1}{2}) \rightarrow \infty$ as $\beta \rightarrow \infty$, we have:
			$$\lim_{\beta \rightarrow \infty} U(\beta ) = H_{\beta,0,p}'(a_2(\beta)) = \infty~,$$
			which proves (d).
			
			Finally, once again, note that as $\beta \rightarrow \hat{\beta}_p^+$, both the positive roots $a_1(\beta)$ and $a_2(\beta)$ of $H_{\beta,0,p}''$ also converge to the only positive root $\sqrt{1-\frac{2}{p}}$ of $H_{\hat{\beta}_p,0,p}''$. Hence, both $U(\beta)$ and $L(\beta)$ converge to $\hat{h}_p$. Hence, $H_{\beta,0,p}'(a_1) \rightarrow -\hat{h}_p$ and $H_{\beta,0,p}'(-a_2) = -H_{\beta,0,p}'(a_2) \rightarrow \hat{h}_p$. This proves (e) and completes the proof of Lemma \ref{pcritdescr}.  
		\end{proof}
		
		Finally, we describe the set $\mathfrak{R}_p$. Towards this, let us recall the notations in Lemma \ref{pcritdescr}, and define the set $\mathfrak{B}_p$ to be the union of the graphs of the functions $U$ and $L$ when $p$ is odd, and union of the graphs of the functions $\pm U$ and $\pm L$ when $p$ is even.

		\begin{lemma}\label{regrgam}
			$\mathfrak{R}_p = (\overline{\mathfrak{C}_p})^c$, where $\overline{S}$ denotes the closure of a set $S\subseteq \Theta$ with respect to the Euclidean topology.
		\end{lemma}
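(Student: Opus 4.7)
The plan is to prove the equality $\mathfrak{R}_p = (\overline{\mathfrak{C}_p})^c$ by establishing both inclusions, leaning heavily on the explicit geometric description given in Lemma \ref{pcritdescr}. The forward inclusion $\mathfrak{R}_p \subseteq (\overline{\mathfrak{C}_p})^c$ will follow from openness of $\mathfrak{R}_p$ together with $\mathfrak{R}_p \cap \mathfrak{C}_p = \emptyset$. The reverse inclusion will follow from an exhaustive classification of $\Theta$ coupled with the observation that every point of $\mathfrak{S}_p$ and every point of the boundary curve $\mathfrak{B}_p$ is a limit of points in $\mathfrak{C}_p$.

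For openness of $\mathfrak{R}_p$, fix $(\beta_0, h_0) \in \mathfrak{R}_p$ with unique stationary point $m_* \in (-1,1)$ satisfying $H''_{\beta_0, h_0, p}(m_*) < 0$. Since $H'_{\beta,h,p}(x) = p\beta x^{p-1} + h - \tanh^{-1}(x)$ diverges to $\pm\infty$ at $x = \mp 1$, one can pick $\delta > 0$ and a small neighborhood $V$ of $(\beta_0, h_0)$ such that, uniformly over $(\beta,h) \in V$, $H'_{\beta,h,p}$ is strictly positive on $[-1, -1+\delta]$ and strictly negative on $[1-\delta, 1]$. Choose $\eta > 0$ with $[m_*-\eta, m_*+\eta] \subseteq [-1+\delta, 1-\delta]$ and $H''_{\beta_0, h_0, p} < 0$ on $[m_*-\eta, m_*+\eta]$. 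By continuity of $H'$ and $H''$ jointly in $(\beta, h, x)$ together with compactness of $K := [-1+\delta, 1-\delta] \setminus (m_*-\eta, m_*+\eta)$, shrinking $V$ guarantees that $H'_{\beta,h,p}$ remains bounded away from zero on $K$, while on $[m_*-\eta, m_*+\eta]$ the implicit function theorem (applicable since $H''(m_*) \neq 0$) produces a unique zero $m'_*(\beta,h)$ with $H''_{\beta,h,p}(m'_*) < 0$. Hence $(\beta,h) \in \mathfrak{R}_p$, proving openness. Disjointness from $\mathfrak{C}_p$ is immediate by definition (unique vs.\ multiple local maximizers), yielding $\mathfrak{R}_p \subseteq (\overline{\mathfrak{C}_p})^c$.

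For the reverse inclusion, I first classify every $(\beta,h) \in \Theta$. Because $H'_{\beta,h,p}(-1^+) = +\infty$ and $H'_{\beta,h,p}(1^-) = -\infty$, $H_{\beta,h,p}$ always attains its maximum in the interior $(-1,1)$, guaranteeing at least one local maximizer. If there are multiple local maximizers, $(\beta,h) \in \mathfrak{C}_p$; if the unique local maximizer $m_*$ has $H''(m_*) = 0$, then $(\beta,h) \in \mathfrak{S}_p$; if unique with $H''(m_*) < 0$ and no other stationary point, then $(\beta,h) \in \mathfrak{R}_p$. The only remaining possibility is a unique local maximizer $m_*$ with $H''(m_*) < 0$ and at least one extra stationary point $m_{**}$; such $m_{**}$ cannot be a local minimum, for the boundary behavior of $H'$ would then force a second local maximum between $m_{**}$ and the appropriate endpoint, violating uniqueness. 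Thus $m_{**}$ is a stationary inflection point, and the constructive analysis in the proof of Lemma \ref{pcritdescr} identifies such $(\beta,h)$ precisely with $\mathfrak{B}_p$. This yields the disjoint decomposition $\Theta = \mathfrak{R}_p \sqcup \mathfrak{C}_p \sqcup \mathfrak{S}_p \sqcup \mathfrak{B}_p$.

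It remains to check $\mathfrak{S}_p \cup \mathfrak{B}_p \subseteq \overline{\mathfrak{C}_p}$. For a point on $\mathfrak{B}_p$, say $(\beta, U(\beta))$ (the cases of $L$, $-U$, $-L$ being handled identically), choose $h_n \uparrow U(\beta)$ with $h_n$ lying in the open interval that Lemma \ref{pcritdescr} identifies with the $\mathfrak{C}_p$-slice at parameter $\beta$; each $(\beta, h_n) \in \mathfrak{C}_p$ and they converge to $(\beta, U(\beta))$. For a point $(\hat\beta_p, \pm \hat h_p) \in \mathfrak{S}_p$, take $\beta_n \downarrow \hat\beta_p$ and consider $(\beta_n, \pm C(\beta_n))$: by Lemma \ref{pcritdescr}(a) the function $H_{\beta_n, \pm C(\beta_n), p}$ has more than one global (hence local) maximizer, so these points lie in $\mathfrak{C}_p$, and by part (e) they converge to $(\hat\beta_p, \pm \hat h_p)$. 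Combining yields $\overline{\mathfrak{C}_p} = \mathfrak{C}_p \cup \mathfrak{S}_p \cup \mathfrak{B}_p = \Theta \setminus \mathfrak{R}_p$, whence $(\overline{\mathfrak{C}_p})^c = \mathfrak{R}_p$. The main obstacle is the classification step — ruling out that a second stationary point accompanying a non-degenerate local maximizer could be a local minimum — which is precisely where the blow-up of $H'$ at $\pm 1$ is essential.
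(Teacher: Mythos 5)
Your proof is essentially correct and reaches the same conclusion, but it takes a somewhat different route from the paper. The paper deduces $\mathfrak{R}_p \subseteq (\overline{\mathfrak{C}_p})^c$ directly from the boundary description $\partial\mathfrak{C}_p = \mathfrak{B}_p \cup \mathfrak{S}_p$ obtained in Lemma \ref{pcritdescr} and Lemma \ref{derlv}, while you instead prove openness of $\mathfrak{R}_p$ from scratch (via the blow-up of $H'$ at $\pm 1$, compactness of the excluded region, and a monotonicity/implicit-function argument near the non-degenerate critical point) and then combine it with $\mathfrak{R}_p \cap \mathfrak{C}_p = \emptyset$; this is a more self-contained topological argument and a nice alternative, at the modest cost of re-deriving some information that is implicit in the boundary geometry. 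For the reverse inclusion, you give an abstract four-way classification of $\Theta$ and then exhibit explicit approximating sequences in $\mathfrak{C}_p$ converging to points of $\mathfrak{S}_p$ and $\mathfrak{B}_p$, whereas the paper simply reads off $\overline{\mathfrak{C}_p}$ from the band description; both are valid.

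One small gap worth closing: in your classification step you assert that the configurations consisting of a unique local maximizer $m_*$ with $H''(m_*) < 0$ plus at least one stationary inflection point are ``identified precisely with $\mathfrak{B}_p$'' by the proof of Lemma \ref{pcritdescr}. That proof establishes the inclusion $\mathfrak{B}_p \subseteq \{\text{such configurations}\}$ and, for $\beta > \hat{\beta}_p$ with $h$ outside the band, that $H'$ has a single root which is a non-degenerate maximum, but it does not explicitly rule out such configurations when $\beta \le \hat{\beta}_p$. For $\beta < \hat{\beta}_p$ this is immediate since $H'' < 0$ everywhere, so there can be no stationary inflection point. The delicate case is $\beta = \hat{\beta}_p$: here $H''$ vanishes on a finite set $D$ (one or two points), and one must verify that the unique zero of $H'$ cannot lie in $D$ unless $h = \hat{h}_p$ (odd $p$) or $h = \pm\hat{h}_p$ (even $p$). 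The paper's own proof supplies exactly this observation, by computing $H'_{\hat{\beta}_p,h,p}$ on $D$ and seeing it equals $h \mp \hat{h}_p$. You should include that check (or cite it) to make your partition of $\Theta$ airtight; once added, your argument is complete.
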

		\begin{proof}
			It is clear from the proofs of Lemma \ref{pcritdescr} and Lemma \ref{derlv}, that
			$\partial \mathfrak{C}_p = \mathfrak{B}_p\bigcup \mathfrak{S}_p$ (here $\partial$ denotes the topological boundary), and that for all $(\beta,h) \in \mathfrak{B}_p$, the function $H_{\beta,h,p}$ has exactly one local maximizer with the remaining stationary point(s) being point(s) of inflection. This shows that $\mathfrak{R}_p \subseteq (\overline{\mathfrak{C}_p})^c$. On the other hand, it also follows from the proofs of Lemma \ref{pcritdescr} and Lemma \ref{derlv}, that for all $\beta >\hat{\beta}_p$, the set of all points $(\beta,h)$ in the complement of $\overline{\mathfrak{C}_p}$ (i.e. the set of all $(\beta,h)$ such that $h>U(\beta)$ or $h<L(\beta)$ for odd $p$, and $\pm h > U(\beta)$ or $\pm h <L(\beta)$ for even $p$) belongs to $\mathfrak{R}_p$. 
			
			Next, for $\beta <\hat{\beta}_p$, $H_{\beta,h,p}''<0$ on $[-1,1]$, so $H$ has exactly one stationary point, which is a global maximizer. Finally, at $\beta=\hat{\beta}_p$, $H_{\beta,h,p}''<0$ everywhere on $[-1,1]\setminus D$ where the set $D$ consists of $\sqrt{1-2/p}$ if $p$ is odd, and $\pm \sqrt{1-2/p}$ if $p$ is even. Hence, $H_{\beta,h,p}$ once again has exactly one local maximizer. Also, note that $H_{\beta,h,p}'(\sqrt{1-2/p}) = h-\hat{h}_p$ for all $p$, and $H_{\beta,h,p}'(-\sqrt{1-2/p}) = h+\hat{h}_p$ for even $p$. Hence, no point in $D$ can be a stationary point of $H_{\beta,h,p}$ for odd $p$ with $h\ne \hat{h}_p$, and for even $p$ with $h\ne \pm \hat{h}_p$. So, the unique local maximizer must lie outside $D$, where the second derivative is negative. This shows that $\mathfrak{R}_p \supseteq (\overline{\mathfrak{C}_p})^c$, completing the proof of Lemma \ref{regrgam}.
		\end{proof}

		In conclusion, the sets $\mathfrak{R}_p$, $\mathfrak{C}_p$, $\mathfrak{S}_p$ and $\mathfrak{B}_p$ form a partition of the parameter space $\Theta$. Note that for every point $(\beta,h) \in \mathfrak{B}_p$, the function $H_{\beta,h,p}$ has exactly one local maximizer with the remaining stationary point(s) being point(s) of inflection.

		\section{Proof of Lemma \ref{prop:hit_from_zero}} \label{proofprophit}
		To begin with, note that $U_t := S_t^+/2$ is a birth-and-death chain on the state space $$\tilde{R} := \left\{\frac{N}{2}, \frac{N}{2}-1,\ldots,\frac{N}{2}-v\right\}$$ where $v$ is the largest integer such that $N-2v \ge \lceil Nm_-\rceil$. If $N$ is odd, the chain $U_t$ is not integer valued, and then one can just shift all states by $-1/2$.  By Equation (5.7) in \cite{levin2008}, we know that
		$$\mathbb{E}_{\ell-1} (\tau_\ell) \le \frac{1}{q_\ell \pi^{(\ell)}(\ell)}$$
		for all $\frac{N}{2}-v < \ell \le \frac{\lceil N m_+ + c N^{1/2}\rceil}{2}$, where $\pi^{(\ell)}$ denotes the stationary distribution of the chain $U_t$ restricted on the set $\{\frac{N}{2}-v, \frac{N}{2}-v+1\ldots, \ell\}$, $q_\ell$ denotes the transition probability of $U_t$ going down from $\ell$, and 
		$$\tau_\ell := \inf\{t\ge 0: U_t = \ell\}~.$$
		Since, $q_\ell$ is bounded away from $0$ uniformly over $\ell > \frac{N}{2}-v$, we have:
		\begin{equation}\label{elm1tl}
			\mathbb{E}_{\ell-1} (\tau_\ell) \le C \frac{ \sum_{j=\tilde{\ell}}^{\ell} \binom{N}{N(1+y)/2} \exp\left( N\beta y^p + Nh y \right)}{
			\binom{N}{N(1+x)/2} \exp\left( N\beta x^p + Nh x \right)}
		\end{equation}
		for some constant $C>0$, uniformly over all $\frac{N}{2}-v < \ell \le \frac{\lceil N m_+ + c N^{1/2}\rceil}{2}$, $\ell = Nx/2$, $j = Ny/2$ and $\tilde{\ell}:= \frac{N}{2}-v$. Applying Stirling's approximation to \eqref{elm1tl}, we have the following for all large $N$ and $\ell = \tilde{\ell} + O(\sqrt{N\log N})$:
		
		\begin{equation}\label{firststbn7}
				\mathbb{E}_{\ell-1} (\tau_\ell) \le 2C \sum_{j=\tilde{\ell}}^{\ell} \exp\left[N(H_{\beta,h,p}(y)-H_{\beta,h,p}(x))\right] \le  2C(\ell-\tilde{\ell}) = O(\sqrt{N\log N}).  
		\end{equation}
		 Note that the second inequality in \eqref{firststbn7} is true because $H$ is increasing in the interval $[m_-, m_+]$ within which both $x$ and $y$ lie. 
		 
		 Next, suppose that  $\tilde{\ell} + K \sqrt{N\log N} \le \ell \le \ell_0$, where $\ell_0 := (i_0-\delta) N/2$ with $i_0$ being the unique infletion point of $H$ between $m_-$ and $m_+$, and $K, \delta>0$ are constants to be chosen suitably later.  First, note that if $j \le (\tilde{\ell}+\ell)/2$, then

		 \begin{eqnarray*}
			 \exp\left[-N(H(x)-H(y))\right]
			&=&  \exp\left[ -N(x-y)H'(y) - N(x-y)^2H''(\xi)/2 \right]\nonumber\\&\le&  \exp\left[- \frac{(\ell-\tilde{\ell})^2H''(\xi)}{2N} \right]\nonumber
		\end{eqnarray*}
		for some $\xi \in [y,x]$ between $x$ and $y$, where the last inequality followed from the fact that both $H'$ and $H''$ are non-negative on $[y,x]$. In fact, since $x\le i_0-\delta$, we can lower bound $H''(\xi)$ by a positive constant $\gamma$ (depending on $H$ and $\delta$). Combining all these, we have:
		
		\begin{equation*}
			\exp\left[-N(H(x)-H(y))\right]\le  \exp\left[-\frac{\gamma}{2N}(K^2 N\log N)\right] =  N^{-\frac{\gamma K^2}{2}}.
		\end{equation*}
	We may now choose $\delta$ arbitrarily small (but fixed), and $K \ge \sqrt{2/\gamma}$, so that:
	\begin{equation}\label{secbound46}
		\exp\left[-N(H(x)-H(y))\right] = O\left(\frac{1}{N}\right)
	\end{equation}
	whenever $j \le (\tilde{\ell}+\ell)/2$. Now suppose that $j > (\tilde{\ell}+\ell)/2$. Note that:
	
	$$\exp\left[-N(H(x)-H(y))\right] = \exp\left[ -N(x-y)H'(\xi_{j,\ell})\right] = \exp\left[ -2(\ell-j)H'(\xi_{j,\ell})\right]$$
	for some $\xi_{j,\ell} \in [y,x]$. Also, there exists $u_{j,\ell} \in [m_-,\xi_{j,\ell}]$ such that:
	$$H'(\xi_{j,\ell}) = (\xi_{j,\ell} - m_-) H''(u_{j,\ell}) \ge \gamma (y-m_-) \ge \frac{2\gamma}{N}(j-\tilde{\ell}) \ge \frac{\gamma}{N}(\ell-\tilde{\ell}).$$ Combining all these, we have the following whenever $j>(\tilde{\ell}+\ell)/2$.
	\begin{equation}\label{third56}
	\exp\left[-N(H(x)-H(y))\right] \le \exp\left[ -\frac{2\gamma}{N}(\ell-\tilde{\ell})(\ell-j)\right]
	\end{equation}
Combining \eqref{secbound46} and \eqref{third56}, we have the following whenever $\tilde{\ell} + K \sqrt{N\log N} \le \ell \le \ell_0$:
\begin{eqnarray}\label{fourth92}
	\E_{\ell-1}(\tau_\ell) &\le& 2C \sum_{j=\tilde{\ell}}^{\ell} \exp\left[N(H_{\beta,h,p}(y)-H_{\beta,h,p}(x))\right]\nonumber\\ &\le & \sum_{j=\tilde{\ell}}^{(\tilde{\ell}+\ell)/2} O\left(\frac{1}{N}\right) + 2C\sum_{j=(\tilde{\ell}+\ell)/2 +1}^{\ell}  \exp\left[ -\frac{2\gamma}{N}(\ell-\tilde{\ell})(\ell-j)\right]\nonumber\\&\le& O(1) + \frac{2C}{1-\exp\left[ -\frac{2\gamma}{N}(\ell-\tilde{\ell})\right]}\nonumber\\ &\le& O(1)\left[1 +\frac{N}{\ell-\tilde{\ell}}\right]
\end{eqnarray}
where the last inequality followed from the fact that $\frac{2\gamma}{N}(\ell-\tilde{\ell}) \le 2\gamma$ and there exists a constant $\alpha>0$ (depending only on $\gamma$) such that $e^{-x} < 1-\alpha x$ for all $x\in [0,2\gamma]$.

Next, let $\ell_1 \le \ell \le \ell_+ - 1$, where $\ell_1 := (i_0+\delta)N/2$ and $\ell_+ := Nm_+/2$. Note that if $j \le (\ell_2+\ell)/2$, where $\ell_2 := (i_0+\frac{\delta}{2})\frac{N}{2}$, then
$$x-y = \frac{2}{N}(\ell-j) \ge \frac{\ell-\ell_2}{N} \ge \frac{\ell_1-\ell_2}{N} = \frac{\delta}{4}~.$$ Since $H$ is strictly increasing on $[y,x]$, we have the following whenever $j \le (\ell_2+\ell)/2$:
\begin{equation}\label{ard34}
	\exp[-N(H(x)-H(y))] = \exp[-\Omega_\delta(N)]~.
\end{equation}
 So, let $j > (\ell_2+\ell)/2$. For this case, we have as before
 $$\exp\left[-N(H(x)-H(y))\right] = \exp\left[ -N(x-y)H'(\xi_{j,\ell})\right] = \exp\left[ -2(\ell-j)H'(\xi_{j,\ell})\right]$$
 for some $\xi_{j,\ell} \in [y,x]$. Also, there exists $u_{j,\ell} \in [\xi_{j,\ell}, m_+]$ such that:
 $$H'(\xi_{j,\ell}) = (\xi_{j,\ell} - m_+) H''(u_{j,\ell}) \ge \tilde{\gamma} (m_+ - x) \ge \frac{2\tilde{\gamma}}{N}(\ell_+-\ell)~.$$ 
  where $-\tilde{\gamma} <0$ is an upper bound of $H''$ on $[i_0+3\delta/4,m_+]$ (note that $y\ge i_0+3\delta/4$). 
 Combining all these, we have the following whenever $j>(\ell_2+\ell)/2$.
 \begin{equation}\label{ard3564}
 	\exp\left[-N(H(x)-H(y))\right] \le \exp\left[ -\frac{4\tilde{\gamma}}{N}(\ell_+-\ell)(\ell-j)\right]
 \end{equation}
 Combining \eqref{ard34} and \eqref{ard3564}, we have the following for $\ell_1 \le \ell \le \ell_+ - 1$:
 \begin{eqnarray}\label{bdd_inq_1}
 	\E_{\ell-1}(\tau_\ell) &\le& 2C \sum_{j=\tilde{\ell}}^{\ell} \exp\left[N(H_{\beta,h,p}(y)-H_{\beta,h,p}(x))\right]\nonumber\\ &\le & 2C\sum_{j=\tilde{\ell}}^{(\ell_2+\ell)/2} \exp[-\Omega_\delta(N)] + 2C\sum_{j=(\ell_2+\ell)/2 +1}^{\ell}  \exp\left[ -\frac{4\tilde{\gamma}}{N}(\ell_+-\ell)(\ell-j)\right]\nonumber\\&\le& o(1) + \frac{2C}{1-\exp\left[ -\frac{4\tilde{\gamma}}{N}(\ell_+-\ell)\right]}\nonumber\\ &\le& o(1)+ \frac{O(N)}{\ell_+-\ell}~.
 \end{eqnarray}

Now, suppose that $\ell_+ \le \ell \le \ell_+ + D\sqrt{N}$ for some constant $D>0$. First, note that if $\ell_+ \le j \le \ell_+ + D\sqrt{N}$, then:
\begin{equation}\label{sj21}
	\exp[-N(H(x)-H(y))] = \exp\left[-\frac{N}{2}\left((x-m_+)^2 H''(\xi_1) - (y-m_+)^2 H''(\xi_2)\right)\right] = O(1).
\end{equation} 
Next, if $j< \ell_+$, then we have:
\begin{eqnarray}\label{sj245}
	\exp[-N(H(x)-H(y))] &=& \exp\left[-N\left(\frac{1}{2}(x-m_+)^2 H''(\xi_1) - (H(y)-H(m_+))\right)\right]\nonumber\\&=& O(1)\exp[N (H(y)-H(m_+))].
\end{eqnarray}
Note that:

\[ \exp[N (H(y)-H(m_+))] =
\begin{cases} 
	O(1) & \text{if}~\ell_+ - N^{\frac{1}{2}}\log N \le j < \ell_+ \\
	\exp[-\Omega(\log^2 N)]& \text{if}~ \ell_1 \le j <  \ell_+ - N^{\frac{1}{2}}\log N\\	\exp[-\Omega(N)] & \text{if}~ j \le \ell_1.
\end{cases}
\]
This, together with \eqref{sj21} and \eqref{sj245} implies that for $\ell_+ \le \ell \le \ell_+ + D\sqrt{N}$,
\begin{eqnarray}\label{sf788}
	\E_{\ell-1}(\tau_\ell) &\le& 2C \sum_{j=\tilde{\ell}}^{\ell} \exp\left[N(H_{\beta,h,p}(y)-H_{\beta,h,p}(x))\right]\nonumber\\ &\le & O(\sqrt{N}) + O(N^\frac{1}{2}\log N) + o(1) = O(N^\frac{1}{2}\log N). 
\end{eqnarray}
Finally, suppose that $\ell_0 < \ell < \ell_1$. If $j < (i_0-2\delta)N/2$ for $\delta>0$ sufficiently small, then
$$\exp[N(H(y)-H(x))] = \exp(-\Omega_\delta(N)).$$
Otherwise, i.e. if $(i_0-2\delta)N/2 \le j \le \ell$, then:
$$\exp\left[N(H(y)-H(x))\right] = \exp\left[ N(y-x)H'(\xi_{j,\ell})\right] = \exp\left[ 2(j-\ell)H'(\xi_{j,\ell})\right]$$ for some $\xi_{j,\ell} \in [y,x]\subseteq [i_0-2\delta, i_0+\delta]$. Note that $u := \inf_{t \in [i_0-2\delta, i_0+\delta]} H'(t) > 0$ and hence,

\begin{eqnarray}\label{bdd_inq_19}
	\E_{\ell-1}(\tau_\ell) &\le& 2C \sum_{j=\tilde{\ell}}^{\ell} \exp\left[N(H_{\beta,h,p}(y)-H_{\beta,h,p}(x))\right]\nonumber\\ &\le & 2C\sum_{j=\tilde{\ell}}^{(i_0-2\delta)\frac{N}{2} - 1} \exp[-\Omega_\delta(N)] + 2C\sum_{j=(i_0-2\delta)\frac{N}{2}}^{\ell}  \exp[2(j-\ell)u]\nonumber\\&\le& o(1) + \frac{2C}{1-\exp(-2u)} =  O(1)~.
\end{eqnarray}
Combining \eqref{firststbn7}, \eqref{fourth92}, \eqref{bdd_inq_1}, \eqref{sf788} and \eqref{bdd_inq_19}, we have:
\begin{eqnarray*}
	&&\E_{\tilde{\ell}-1} [\tau_{\ell_+ + D\sqrt{N}}]\nonumber\\ &=& \sum_{\ell=\tilde{\ell}}^{\ell_+ + D\sqrt{N}} \E_{\ell-1} [\tau_\ell]\\ &=& \sum_{\ell=\tilde{\ell}}^{\tilde{\ell}+K\sqrt{N\log N}} \E_{\ell-1} [\tau_\ell] + \sum_{\ell = \tilde{\ell}+K\sqrt{N\log N}}^{\ell_0} \E_{\ell-1} [\tau_\ell] + \sum_{\ell = \ell_1}^{\ell_+-1} \E_{\ell-1} [\tau_\ell] + \sum_{\ell = \ell_+}^{\ell_+ + D\sqrt{N}} \E_{\ell-1} [\tau_\ell] + \sum_{\ell = \ell_0+1}^{\ell_1 -1} \E_{\ell-1} [\tau_\ell]\\ &=& O(N\log N).
	\end{eqnarray*}
The proof of Lemma \ref{prop:hit_from_zero} is now complete.

		\section{Technical Results}\label{appendix_1}
		In this section, we prove some technical lemmas necessary for the main proofs in the paper.
		\begin{lemma}\label{regcht1}
			A point $x\in [-1,1]$ is a fixed point of $\lambda$ if and only if it is a stationary point of $H$. Further, any such fixed point $x$ of $\lambda$ satisfies $\sgn (\lambda'(x)-1) = \sgn (H''(x))$.
		\end{lemma}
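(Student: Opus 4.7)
The plan is to compute $H'$ and compare the equation $H'(x)=0$ with the fixed-point equation $x=\lambda(x)$, then relate $\lambda'(x)-1$ to $H''(x)$ at fixed points through a direct calculation.

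First I would compute the derivatives of $I$: since $I(x)=\tfrac{1}{2}[(1+x)\log(1+x)+(1-x)\log(1-x)]$, we have $I'(x)=\tanh^{-1}(x)$ and $I''(x)=1/(1-x^2)$. Hence
\[
H'(x)=p\beta x^{p-1}+h-\tanh^{-1}(x), \qquad H''(x)=p(p-1)\beta x^{p-2}-\frac{1}{1-x^2}.
\]
So $H'(x)=0$ iff $\tanh^{-1}(x)=p\beta x^{p-1}+h$ iff $x=\tanh(p\beta x^{p-1}+h)=\lambda(x)$, proving the equivalence in the first part. I would also note that neither direction allows $x=\pm 1$: such points are excluded both as fixed points of $\lambda$ (since $\lambda$ takes values in $(-1,1)$) and as stationary points of $H$ (since $H'(\pm 1)=\mp\infty$), so we may freely assume $x\in(-1,1)$.

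For the second claim, a direct differentiation gives
\[
\lambda'(x)=p(p-1)\beta x^{p-2}\operatorname{sech}^{2}(p\beta x^{p-1}+h)=p(p-1)\beta x^{p-2}\bigl(1-\tanh^{2}(p\beta x^{p-1}+h)\bigr).
\]
At a fixed point we have $\tanh(p\beta x^{p-1}+h)=x$, so $1-\tanh^{2}(\cdot)=1-x^{2}$, giving $\lambda'(x)=p(p-1)\beta x^{p-2}(1-x^{2})$. Subtracting one and factoring yields
\[
\lambda'(x)-1=(1-x^{2})\Bigl[p(p-1)\beta x^{p-2}-\frac{1}{1-x^{2}}\Bigr]=(1-x^{2})\,H''(x).
\]
Since $x\in(-1,1)$, we have $1-x^{2}>0$, hence $\sgn(\lambda'(x)-1)=\sgn(H''(x))$. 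There is essentially no obstacle here — the entire argument reduces to two short derivative computations and the substitution $\tanh(p\beta x^{p-1}+h)=x$ at a fixed point. The only subtlety worth flagging is the boundary check $x\neq\pm 1$, which makes the factor $1-x^{2}$ strictly positive and hence preserves the sign.
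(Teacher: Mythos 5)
Your proof is correct and follows essentially the same route as the paper: identify $H'(x)=0$ with the fixed-point equation, compute $\lambda'$ at a fixed point using $\tanh(p\beta x^{p-1}+h)=x$, and factor $\lambda'(x)-1=(1-x^2)H''(x)$ with the observation that $1-x^2>0$ since $x\neq\pm 1$. The only cosmetic difference is that you spell out the exclusion of $x=\pm1$ from both sides of the equivalence, which the paper handles implicitly via $|H'(\pm1)|=\infty$.
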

		
		\begin{proof}
			Note that $H'(x) = 0$ iff $p\beta x^{p-1} +h - \tanh^{-1}(x) =0$ iff $x= \tanh(p\beta x^{p-1}+h)$ i.e. $\lambda(x)=x$. Now, suppose that $x$ is a fixed point of $\lambda$. Then,
			$$\lambda'(x) = p(p-1)\beta x^{p-2}(1-\tanh^2(p\beta x^{p-1}+h)) = p(p-1)\beta x^{p-2}(1-x^2) = 1+(1-x^2)H''(x)~.$$ Since $|H'(\pm 1)|=\infty$, we know that $1-x^2>0$, which completes the proof of Lemma \ref{regcht1}.
		\end{proof}

		\begin{lemma}\label{special_lemma}
			At a $p$-special point $(\beta,h)$, the function $\lambda = \lambda_{\beta,h,p}$ has a unique fixed point $c^* \in (-1,1)$. Further, we have:
			$$\lambda'(c^*)=1, \quad \lambda''(c^*) = 0 \quad \text{and}\quad \lambda'''(c^*)<0~.$$ 
		\end{lemma}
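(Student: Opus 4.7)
The plan is to proceed in three steps: first locate $c^*$ concretely, then prove it is the unique fixed point of $\lambda$, and finally compute the three derivatives at $c^*$. The main engine is Lemma \ref{regcht1}, which identifies fixed points of $\lambda$ with stationary points of $H := H_{\beta,h,p}$ and yields the identity $\lambda'(c^*) - 1 = (1-c^{*2}) H''(c^*)$. Combined with the explicit description of $\mathfrak{S}_p$ in Appendix \ref{appendix_geo} (which traces back to Lemma F.2 of \cite{Mukherjee_2021}), this tells me that $c^* = \pm w_p$ with $w_p := \sqrt{(p-2)/p}$, so that $c^{*2} = (p-2)/p$, $1-c^{*2} = 2/p$, and $H'(c^*) = H''(c^*) = 0$.

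For uniqueness, I would define $f(x) := p(p-1)\beta x^{p-2}(1-x^2)$, so that $H''(x) = 0$ iff $f(x) = 1$. A direct calculation shows $f'(x) = p(p-1)\beta x^{p-3}[(p-2) - px^2]$, so on $(0,1)$ the function $f$ peaks at $w_p$ with $f(w_p) = 1$ at the special parameters $\beta = \hat\beta_p$. This forces $H'' \le 0$ on $[-1,1]$ with equality only at the point(s) $\pm w_p$, and a sign check of $H'$ using $h = \pm \hat{h}_p$ together with the parity of $p$ rules out the ``wrong-signed'' candidate as a stationary point. Hence $H'$ is strictly decreasing on $(-1,1)$, so it has only one zero, which is $c^*$.

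The first two derivatives now drop out effortlessly: from $\lambda'(c) = p(p-1)\beta c^{p-2}(1-\lambda(c)^2)$ and Lemma \ref{regcht1} I get $\lambda'(c^*) = 1$; differentiating once more and substituting $\lambda(c^*) = c^*$, $\lambda'(c^*) = 1$ yields
\[
\lambda''(c^*) = p(p-1)\beta (c^*)^{p-3}\bigl[(p-2) - p(c^*)^2\bigr] = 0,
\]
using $p(c^*)^2 = p-2$.

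The real work, and the only delicate step, is $\lambda'''(c^*)$. I would write $\lambda = g\circ u$ with $g = \tanh$ and $u(c) = p\beta c^{p-1}+h$, apply the chain rule
\[
\lambda'''(c^*) = g'''(u^*)\,u'(c^*)^3 + 3 g''(u^*)\,u'(c^*)\,u''(c^*) + g'(u^*)\,u'''(c^*),
\]
with $u^* = \tanh^{-1}(c^*)$, and exploit the tanh identities $g'(u^*) = 1-c^{*2}$, $g''(u^*) = -2c^*(1-c^{*2})$, $g'''(u^*) = (1-c^{*2})(6c^{*2}-2)$, together with $u''(c^*) = (p-2)u'(c^*)/c^*$, $u'''(c^*) = (p-2)(p-3)u'(c^*)/(c^*)^2$ and the key identity $u'(c^*) = 1/(1-c^{*2}) = p/2$. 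Substituting $c^{*2} = (p-2)/p$ throughout, the three terms simplify to $p(p-3)$, $-3p(p-2)$, and $p(p-3)$ respectively, so $\lambda'''(c^*) = -p^2 < 0$. The main obstacle is just the bookkeeping in this three-term expansion; the clean outcome is governed by the reciprocity $1-c^{*2} = 2/p$ and $u'(c^*) = p/2$.
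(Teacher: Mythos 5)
Your proof is correct and follows the same overall strategy as the paper: invoke Lemma F.2 of the cited reference to identify the $p$-special points $(\hat\beta_p,\pm\hat h_p)$ and the maximizer $c^*=\pm\sqrt{1-2/p}$, then use Lemma \ref{regcht1} to transfer information between $H$ and $\lambda$, and compute the derivatives of $\lambda$ at $c^*$. Two presentational differences are worth noting: for uniqueness you give a self-contained concavity argument via the auxiliary function $f(x)=p(p-1)\beta x^{p-2}(1-x^2)$, showing $f\le 1$ on $[-1,1]$ so that $H''\le 0$ with equality only at $\pm\sqrt{1-2/p}$, hence $H'$ is strictly decreasing and has a unique zero, whereas the paper handles odd and even $p$ by separate appeals to Lemma F.2 plus a contradiction argument on $[-1,0)$; and for $\lambda'''$ you expand the chain rule for $\tanh\circ u$ explicitly and land directly on the clean closed form $\lambda'''(c^*)=-p^2$, whereas the paper differentiates the recursion $\lambda'(x)=p(p-1)\beta x^{p-2}(1-\lambda(x)^2)$ once more and records $\lambda'''(c^*)=-2\hat\beta_p p(p-1)(p-2)(1-2/p)^{(p-4)/2}$, which simplifies to the same value $-p^2$ upon substituting the formula for $\hat\beta_p$.
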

		
		\begin{proof}
			To begin with, define:
			\[
			\hat{\beta}_p = \frac{1}{2(p-1)}\left(\frac{p}{p-2}\right)^{\frac{p-2}{2}},
			\quad \hat{h}_p = \tanh^{-1}\left(\sqrt{\frac{p-2}{p}}\right) - \hat{\beta}_p p \left(\frac{p-2}{p}\right)^{\frac{p-1}{2}}.
			\]
			By Lemma F.2 in \cite{Mukherjee_2021}, the only $p$-special point for odd $p$ is $(\hat{\beta}_p,\hat{h}_p)$, and the only $p$-special points for even $p$ are $(\hat{\beta}_p,\pm \hat{h}_p)$. In this case, the function $H = H_{\beta,h,p}$ has a unique global maximizer $c^* := i\sqrt{1-2/p}$, if the $p$-special point under consideration is $(\hat{\beta}_p,i\hat{h}_p)$ for $i\in \{-1,1\}$. By Lemma F.2 in \cite{Mukherjee_2021} and Lemma \ref{regcht1}, $c^*$ is a fixed point of $\lambda$ and $\lambda'(c^*)=1$. Now, note that:
			$$\lambda'(x) = p(p-1)\beta x^{p-2} (1-\lambda^2(x))$$
			and hence,
			\begin{equation}\label{pspwrc}
				\lambda''(x) = \beta p(p-1)x^{p-3}\left[(p-2)(1-\lambda^2(x)) -2x\lambda(x)\lambda'(x) \right].
			\end{equation}
			Putting $x=c^*$ in \eqref{pspwrc} and recalling that $\lambda(c^*)=c^*, \lambda'(c^*)=1$, we have $$(p-2)(1-\lambda^2(x)) -2x\lambda(x)\lambda'(x)=0,\quad\text{i.e.}\quad \lambda''(c^*) = 0.$$
			Differentiating both sides of \eqref{pspwrc} followed by straightforward calculations give:
			$$\lambda'''(c^*) = -2\beta p(p-1)(p-2)\left(1-\frac{2}{p}\right)^{\frac{p-4}{2}} < 0~.$$
			Finally, note that for odd $p$, Lemma F.2 (1) in \cite{Mukherjee_2021} says that $c^*$ is the unique stationary point of $H$, and hence by Lemma \ref{regcht1}, the unique fixed point of $\lambda$. If $p$ is even and $h>0$, then once again, it follows from the proof of Lemma F.2 (2) in \cite{Mukherjee_2021} that $H'(x)>0$ on $[0,c^*)$ and $H'(x)<0$ on $(c^*,1]$, which implies that $c^*$ is the only stationary point of $H$ on $[0,1]$. Suppose that $H''(x) >0$ for some $x\in [-1,0)$. Since $H''(-1) = -\infty$ and $H''(0) = -1$, this will imply the existence of at least two negative roots of $H''$, thereby contradicting Lemma F.2 (2) in \cite{Mukherjee_2021}, which says that $c^*$ and $-c^*$ are the only roots of $H''$. Hence, $H'' < 0$ on $[-1,0)\setminus \{-c^*\}$ and $H''(-c^*)=0$. In particular, $H$ must be strictly concave on $[-1,0)$, so any stationary point in this interval must be a global maximizer on this interval. Hence, $H'(y) <0$ for any point $y$ any point in $[-1,0)$ to the right of this (fictitious) stationary point, which clearly contradicts $H'(0)>0$. The case $h<0$ can be handled similarly, thereby proving that $c^*$ is the unique stationary point of $H$ in this case, too. The proof of Lemma \ref{special_lemma} is complete.
		\end{proof}
		
		\begin{lemma}\label{derlv}
			Suppose that $p\ge 4$ is even, $\beta > \hat{\beta}_p$, and $a_1<a_2$ are the two roots of $H_{\beta,0,p}''$ on the positive side. Suppose throughout, that $h\ge 0$. Then, the following are true.
			\begin{enumerate}
				\item[(1)] If $H_{\beta,0,p}'(a_2)\le 0$, then $H_{\beta,h,p}$ has more than one local maximizer if and only if $-H_{\beta,0,p}'(a_2) < h < -H_{\beta,0,p}'(a_1)$.
				
				\item[(2)] If $H_{\beta,0,p}'(a_2) >0$, then $H_{\beta,h,p}$ has more than one local maximizer if and only if $h < -\min\{H_{\beta,0,p}'(a_1), H_{\beta,0,p}'(-a_2)\}$.
			\end{enumerate}
		\end{lemma}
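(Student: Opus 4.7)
The plan is to read off the number of local maximizers of $H_{\beta,h,p}$ from the number of strict $+\to-$ sign changes of its derivative $f_h := H_{\beta,h,p}' = H_{\beta,0,p}' + h$, exploiting the piecewise monotonicity of $H_{\beta,0,p}'$ already established in the proof of Lemma~\ref{pcritdescr}(2).

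First, I would record the relevant shape. For even $p$, $H_{\beta,0,p}'(x)=p\beta x^{p-1}-\tanh^{-1}(x)$ is odd; from the sign pattern of $H_{\beta,0,p}''$ on $[0,1]$ combined with oddness, $H_{\beta,0,p}'$ is strictly decreasing on each of $[-1,-a_2]$, $[-a_1,a_1]$, $[a_2,1]$ and strictly increasing on $[-a_2,-a_1]$ and $[a_1,a_2]$, with $H_{\beta,0,p}'(\pm 1)=\mp\infty$. Write $A := H_{\beta,0,p}'(a_2)$ and $B := H_{\beta,0,p}'(a_1)$; oddness gives $H_{\beta,0,p}'(-a_2)=-A$ and $H_{\beta,0,p}'(-a_1)=-B$, and because $H_{\beta,0,p}'(0)=0$ while $H_{\beta,0,p}'$ is strictly decreasing on $[0,a_1]$, we get $B<0<-B$.

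Next, since $f_h(-1)=+\infty$ and $f_h(1)=-\infty$, any local maximizer of $H_{\beta,h,p}$ in $(-1,1)$ corresponds to a point where $f_h$ strictly changes sign from $+$ to $-$, and between any two consecutive such points a local minimizer (a strict $-\to +$ change of $f_h$) must intervene. A strict $+\to-$ change of $f_h$ can occur only on a strictly decreasing piece of $H_{\beta,0,p}'$, and does occur precisely when $-h$ lies in the open interior of the range of $H_{\beta,0,p}'$ on that piece. This yields the three criteria
\begin{equation*}
\text{piece } [-1,-a_2]:~ h<A, \quad \text{piece } [-a_1,a_1]:~ B<h<-B, \quad \text{piece } [a_2,1]:~ h>-A.
\end{equation*}
When $h$ equals one of these boundary values, the corresponding zero of $f_h$ is simultaneously a zero of $H_{\beta,h,p}''$ (the tangent is horizontal at a turning point of $H_{\beta,0,p}'$), so that critical point is a stationary inflection point and \emph{not} a local maximizer, which is why the inequalities are strict.

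Finally I would specialize to $h\ge 0$ and split on the sign of $A$. In part~(1), $A\le 0$ rules out the $[-1,-a_2]$ crossing, so at least two local maximizers force strict crossings on both $[-a_1,a_1]$ and $[a_2,1]$, amounting to $-A<h<-B$ (the constraint $B<h$ being automatic since $B<0\le -A$). In part~(2), $A>0$ together with $h\ge 0>-A$ makes the $[a_2,1]$ crossing automatic, so one further strict crossing suffices, supplied either by $[-1,-a_2]$ (needing $h<A$) or by $[-a_1,a_1]$ (needing $h<-B$, with $B<h$ automatic). Their union gives $h<\max\{A,-B\}=-\min\{H_{\beta,0,p}'(-a_2),H_{\beta,0,p}'(a_1)\}$, matching the stated condition. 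The main delicacy is the boundary analysis verifying that a tangential touch of $-h$ with a turning value of $H_{\beta,0,p}'$ produces a stationary inflection point of $H_{\beta,h,p}$ rather than a genuine local maximizer; once this is granted, the rest is a routine count.
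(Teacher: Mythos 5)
Your proposal is correct and follows essentially the same route as the paper: both rely on the piecewise strict monotonicity of $H_{\beta,0,p}'$ over the five subintervals cut out by $\pm a_1,\pm a_2$, the oddness of $H_{\beta,0,p}'$, and the fact that $H_{\beta,h,p}'=H_{\beta,0,p}'+h$ so local maximizers correspond exactly to strict $+\to-$ sign changes. Your ``crossing count on the three decreasing pieces'' is just a cleaner systematization of the paper's explicit sign checks at $0,\pm a_1,\pm a_2$ (with the paper outsourcing the ``only if'' of part~(2) to the proof of Lemma~\ref{pcritdescr}), and your boundary observation — that a tangential root at $\pm a_1,\pm a_2$ coincides with a zero of $H_{\beta,0,p}''$ and is therefore a stationary inflection, not a maximizer — correctly accounts for the strictness of the inequalities.
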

		\begin{proof}
			We highlight on the following monotonicity pattern of $H_{\beta,h,p}'$ (see the proof of Lemma F.3 in \cite{Mukherjee_2021}), which will be used crucially throughout the proof: $H_{\beta,h,p}'$ is strictly decreasing on $[-1,-a_2)$, strictly increasing on $(-a_2,-a_1)$, strictly decreasing on $(-a_1,a_1)$, strictly increasing on $(a_1,a_2)$ and strictly decreasing on $(a_2,1]$.
			
			First, consider the case $H_{\beta,0,p}'(a_2)\le 0$. Suppose that $-H_{\beta,0,p}'(a_2) < h < -H_{\beta,0,p}'(a_1)$. Then, $H_{\beta,h,p}'(0) =h>0$, $H_{\beta,h,p}'(a_1) = H_{\beta,0,p}'(a_1) + h<0$ and $H_{\beta,h,p}'(a_2) = H_{\beta,0,p}'(a_2) + h>0$. Hence, $H_{\beta,h,p}'$ has roots in $(0,a_1)$ and $(a_2,1)$, both of which are local maximizers of $H_{\beta,h,p}$, since $H_{\beta,h,p}''$ is negative on both these intervals. On the other hand, if $h\ge -H_{\beta,0,p}'(a_1)$, then $H_{\beta,h,p}'(a_1) \ge 0$. This implies that $H_{\beta,h,p}'$ can only have two roots on the positive side, one of them being strictly to the right of $a_2$, and another possibly at $a_1$, the latter being an inflection point. Also, since $H_{\beta,h,p}'(-a_2) = - H_{\beta,0,p}'(a_2) + h>0$ and $H_{\beta,h,p}'(0) = h >0$, $H_{\beta,h,p}'$ cannot have any root on the non-positive side. Thus, $H_{\beta,h,p}$ has exactly one local maximizer in this case. Similarly, if $h \le -H_{\beta,0,p}'(a_2)$, then $H_{\beta,h,p}'(a_2) \le 0$ and $H_{\beta,h,p}'<0$ everywhere else on $[a_1,1]$. This means that the only possible non-negative roots of $H_{\beta,h,p}'$ are $a_2$ and some point between $0$ and $a_1$, the former being an inflection point. Further, since $H_{\beta,h,p}'(-a_2) = - H_{\beta,0,p}'(a_2) + h \ge 0$, even if this is $0$, $-a_2$ is an inflection point, and otherwise, $H_{\beta,h,p}'$ has no negative root. So, $H_{\beta,h,p}$ has exactly one local maximizer in this case, too. This proves (1).
			
			Next, note that the \textit{only if} part of (2) has already been established in the proof of part (2) of Lemma \ref{pcritdescr}. So, suppose that $h<-\min\{H_{\beta,0,p}'(a_1), H_{\beta,0,p}'(-a_2)\}$. First, assume that $H_{\beta,0,p}'(a_1) \le H_{\beta,0,p}'(-a_2)$. Then, $h<-H_{\beta,0,p}'(a_1)$, i.e. $H_{\beta,h,p}'(a_1) < 0$. So, $H_{\beta,h,p}'$ has a root in $[0,a_1)$, which is a local maximizer of $H_{\beta,h,p}$. Moreover, $H_{\beta,h,p}'(a_2) = H_{\beta,0,p}'(a_2) + h > 0$, so $H_{\beta,h,p}$ has another local maximizer in $(a_2,1)$. Finally, assume that $H_{\beta,0,p}'(a_1) > H_{\beta,0,p}'(-a_2)$. Then, $h < -H_{\beta,0,p}'(-a_2)$ and hence, $H_{\beta,h,p}'(-a_2) <0$. So, $H_{\beta,h,p}'$ has a root in $(-1,-a_2)$, which is a local maximizer of $H_{\beta,h,p}$. Moreover, once again, $H_{\beta,h,p}'(a_2)>0$, which guarantees another local maximizer in $(a_2,1)$. This proves the \textit{if} part, and completes the proof of Lemma \ref{derlv}.
		\end{proof}

		\section{A Crude Lower Bound of the Mixing Time on $\mathfrak{B}_p$}\label{ap:apxc}
		In this section, we show how the arguments in the proof of Theorem \ref{Theorem 1} (3) can be used to give a crude lower bound on the mixing time of the Glauber dynamics when $(\beta,h)\in \mathfrak{B}_p$ (as defined in Appendix \ref{appendix_geo}).
		
		{\color{black}
			\begin{proposition}\label{crudelb}
				If $(\beta,h) \in \mathfrak{B}_p$, then $t_{\mathbf{mix}}(\epsilon) = \Omega(N^{4/3})$.
			\end{proposition}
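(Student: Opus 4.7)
The plan is to mirror the lower-bound argument for $p$-special points in the proof of Theorem~\ref{Theorem 1}(3), replacing the cubic Taylor expansion of $\lambda$ around a zero-curvature fixed point with a quadratic expansion around the stationary inflection point. This shifts the natural length scale from $N^{-1/4}$ to $N^{-1/3}$ and the time scale from $N^{3/2}$ to $N^{4/3}$.

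The first step is the local analysis of $\lambda = \lambda_{\beta,h,p}$ at the inflection point. For $(\beta, h) \in \mathfrak{B}_p$, Lemma~\ref{regrgam} together with its proof shows that $H = H_{\beta,h,p}$ has a unique local (hence global) maximizer $m_* \in (-1,1)$ with $H''(m_*) < 0$, and a stationary inflection point $\tilde c$ satisfying $H'(\tilde c) = H''(\tilde c) = 0$. By Lemma~\ref{regcht1}, $\tilde c$ is a fixed point of $\lambda$ with $\lambda'(\tilde c) = 1$. A computation identical to the one in the proof of Lemma~\ref{special_lemma} gives
$$\lambda''(\tilde c) \;=\; \frac{1}{\tilde c}\left[(p-2) - \frac{2\tilde c^2}{1-\tilde c^2}\right],$$
which vanishes only if $\tilde c^2 = (p-2)/p$. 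Since $(\beta,h) \in \mathfrak{B}_p$ forces $\beta > \hat{\beta}_p$ by Lemma~\ref{pcritdescr}, the roots of $H''_{\beta,0,p}$ avoid $\pm\sqrt{1-2/p}$, so $\lambda''(\tilde c) \neq 0$. Taylor expanding $H'$ near $\tilde c$ as $H'(c) \approx \tfrac{1}{2} H'''(\tilde c)(c - \tilde c)^2$ and invoking Lemma~\ref{regcht1}, I would deduce $\sgn(\lambda''(\tilde c)) = \sgn(m_* - \tilde c)$. Without loss of generality I take $\lambda''(\tilde c) > 0$ and $m_* > \tilde c$, so the expansion $\lambda(\tilde c + u) - \tilde c = u + \tfrac{\lambda''(\tilde c)}{2} u^2 + O(u^3)$ yields a quadratic drift attracting the chain toward $\tilde c$ from the left.

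Next, I initiate the Glauber dynamics at a spin configuration $\bm{X}^0$ whose mean magnetization $c_0$ equals $\tilde c - 2AN^{-1/3}$ up to $O(1/N)$, where $A > 0$ is to be chosen, and set $f_t := \tilde c - c_t$. Following the construction used in the proof of Theorem~\ref{Theorem 1}(3), I define a coupled chain $\tilde f_t$ with the same transitions as $f_t$ except that at value $f_0$ upward moves are replaced by stays, so that $\tilde f_t \le f_0$ deterministically and $\tilde f_t \le f_t$ stochastically under $\P_{f_0}$. Setting $\tau := \min\{t \ge 0 : \tilde f_t \le AN^{-1/3}\}$ and $d_t := f_0 - \tilde f_{t \wedge \tau}$, the quadratic drift yields $\E(d_{t+1} - d_t \mid \mathscr{A}_t) \le C/N^{5/3}$ on $\{\tau > t\}$ for some $C > 0$. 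Combined with $|d_{t+1} - d_t| \le 2/N$ and $d_t \le AN^{-1/3}$, this gives
$$\E(d_{t+1}^2 - d_t^2 \mid \mathscr{A}_t) \;\le\; 2 d_t \cdot \frac{C}{N^{5/3}} + \frac{4}{N^2} \;\le\; \frac{C'}{N^2}.$$
Iterating and using $d_t \ge \tfrac{1}{2} AN^{-1/3}$ on $\{\tau \le t\}$ for large $N$, Markov's inequality yields $\P_{f_0}(\tau \le t) \le C'' t/(A^2 N^{4/3})$. Choosing $t := c_\epsilon A^2 N^{4/3}$ with $c_\epsilon > 0$ sufficiently small forces $\P_{\bm{X}^0}(c_t \le \tilde c - AN^{-1/3}) \ge 1 - \epsilon/2$ by the stochastic domination.

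To close the argument, note that since the unique global maximizer $m_*$ has negative curvature, $(\beta, h)$ is $p$-regular in the sense of \cite{Mukherjee_2021}, and Theorem~3.1(1) there gives $\bar X - m_* = O_{\P_{\beta,h,p}}(N^{-1/2})$. Thus $\bar X > \tilde c + \delta$ with probability at least $1 - \epsilon/2$ under $\P_{\beta,h,p}$ for large $N$, where $\delta := (m_* - \tilde c)/2 > 0$. The sets $\{c \le \tilde c - AN^{-1/3}\}$ and $\{c > \tilde c + \delta\}$ are disjoint for large $N$, so
$$d_{\mathrm{TV}}\left(\P_{\bm{X}^0}(\bm{X}^t \in \cdot), \P_{\beta,h,p}\right) \;\ge\; (1 - \epsilon/2) - \epsilon/2 \;=\; 1 - \epsilon,$$
and hence $\tm(\epsilon) \ge t = \Omega(N^{4/3})$ for any $\epsilon \in (0, 1/2)$. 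The main subtlety I anticipate is verifying that the modification of $\tilde f_t$ at $f_0$ does not disturb the second-moment bound (it suppresses only upward excursions, so should only improve it), along with the sign identification $\sgn(\lambda''(\tilde c)) = \sgn(m_* - \tilde c)$ which picks out the correct side of $\tilde c$ on which to initialize the chain.
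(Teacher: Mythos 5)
Your proposal is correct and follows essentially the same route as the paper's proof: a quadratic drift estimate around the stationary inflection point, a monotone auxiliary chain, a second-moment/Markov-inequality argument to bound the escape probability from an $N^{-1/3}$-neighborhood in $o(N^{4/3})$ steps, and a comparison with the $N^{-1/2}$-concentration of $\bar X$ near the global maximizer under the stationary measure (the paper writes Theorem 3.1(3) of \cite{Mukherjee_2021} but, as you note, it should be 3.1(1) since the maximizer has negative curvature). The only cosmetic difference is that the paper works with a free exponent $\gamma$ in $e_0 = 2AN^{-\gamma}$ and then optimizes to obtain $\gamma = 1/3$, whereas you substitute $\gamma = 1/3$ from the outset; your sign analysis $\sgn(\lambda''(\tilde c)) = \sgn(m_* - \tilde c)$ is correct and matches the paper's implicit choice of initializing on the side of the inflection point away from the maximizer.
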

			\begin{proof}
				To begin with, in this case, $H_{\beta,h,p}$ has a local maximizer $c^*$ and a stationary inflection point $\bar{c}$. We will only deal with the case $\bar{c} > c^*$, since the other direction will follow similarly. It follows from Theorem 3.1 (3) in \cite{Mukherjee_2021} that under $\P_{\beta,h,p}$, $N^{1/2}(\bar{X}-c^*)$  converges weakly to a non-trivial distribution as $N\rightarrow \infty$. Fix a number $K \in (0,1)$ (to be chosen later), whence there exists $A>0$ (depending on $K$), such that for all large $N$,
				\begin{equation}\label{tv816}
					\P_{\beta,h,p}(|\bar{X}-c^*| \le A N^{-1/2}) \ge K~.
				\end{equation}
				Let $e_t := c_t - \bar{c}$ and set $e_0 := 2AN^{-\gamma}$, where $0<\gamma < 1$ is a quantity to be chosen later. Define a chain $\tet$ with the same transition structure as $\et$, except at $e_0$, where the $\tet$ chain stays with probability equal to that of the $\et$ chain either going up or staying at $e_0$. The chains $\et$ and $\tet$ can be coupled in such a way, that $\et$ dominates $\tet$ stochastically under $\P_{e_0}$. Next, define:
				$$\tau := \min \{t\ge 0: \tet \le AN^{-\gamma}\}$$ and let $d_t := e_0 - \tilde{e}_{t\wedge \tau}$. Note that for $x\in (AN^{-\gamma},e_0)$, we have:
				$$\E_{e_0}(\tett |\tet = x) = \E_{e_0}(\ett |\et = x) =\frac{1}{N}\left(\lambda(\bar{c}+x)-\bar{c}-x\right)+x~.$$
				A Taylor series expansion of the last term of the above identity, together with the facts $\lambda^{'}(\bar{c}) = 1$ and $\lambda(\bar{c}) = \bar{c}$ gives:
				\begin{equation*}
					\E_{e_0}(\tett |\tet = x) = x +\frac{x^2}{2N}\lambda''(\xi) \ge  x - \frac{\alpha}{N} x^2
				\end{equation*}
				for some constant $\alpha >0$, where $\xi \in (\tilde{c}, \tilde{c}+x)$. Defining $\mathscr{A}_t:= \sigma(d_1,\ldots,d_t)$, we can now proceed exactly as in the proof of Theorem 4.2 in \cite{levin2008} to conclude that:
				\begin{equation*}\label{lcbic6}
					\E_{e_0}(d_{t+1}^2-d_t^2 |\mathscr{A}_t)  \le C_A N^{-\min\{3\gamma+1,2\}}.
				\end{equation*}
				for some constant $C_A>0$ depending on $A$. Taking expectation on both sides of \eqref{lcbic} and iterating, we have:
				$$\E_{e_0} d_t^2 \le C_A t N^{-\min\{3\gamma+1,2\}}~.$$
				Once again, following the steps in \cite{levin2008}, we have:
				$$\P_{e_0}(\tau \le t) \le \frac{C_A t}{A^2} N^{2\gamma - \min\{3\gamma+1,2\}}~.$$ Fixing another number $L\in (0,K)$ and setting $t := (A^2L/C_A) N^{\min\{3\gamma+1,2\}-2\gamma}$ above, we get:
				\begin{equation*}\label{tv82}
					\P_{e_0}(\tet \le A N^{-\gamma}) \le \P_{e_0}(\tau \le t) \le L~.   
				\end{equation*} 
				Since $c^* + A N^{-1/2} < \bar{c} + A N^{-\gamma}$ for large enough $N$, we have 
				$$\P_{e_0}(c_t \le c^* + A N^{-1/2}) \le  L~.   $$
				Combining \eqref{tv81} and \eqref{tv82}, we have: 
				$$\max_{\bm x\in \mathcal{C}_N} d_{TV}\left(\P_{\bm x}(\bm X^t \in \cdot), \P_{\beta,h,p}\right) \ge \P_{\beta,h,p}(|\bar{X} - c^*| \le A N^{-1/2}) - \P_{e_0}(|c_t - c^*| \le A N^{-1/2}) \ge K-L$$
				with $t = C_{K,L} N^{\min\{3\gamma+1,2\}-2\gamma}$ for some constant $C_{K,L}>0$.
				We can choose $K$ and $L$ such that $0<L<K<1$ and $K-L>\epsilon$ to conclude that $\tm(\epsilon) \ge C_{K,L} N^{\min\{3\gamma+1,2\}-2\gamma}$. Note that
				$$\argmax_{\gamma \in (0,1)} ~\left(\min\{3\gamma+1,2\}-2\gamma\right) = \frac{1}{3}~,$$
				and the proposition follows on taking $\gamma=\frac{1}{3}$.
			\end{proof}
		}

	\end{document}